\def\R{\mathbb{R}}
\def\C{\mathbb{C}}
\def\P{{\rm P}}
\def\F{\mathcal{F}}
\def\1{\mathbbm{1}}
\def\lam{\pmb{\lambda}}
\def\T{T_{{\rm col}}}
\def\TT{T_{{\rm col},0}}
\def\l{\ell}
\theoremstyle{definition}
\newtheorem{theorem}{Theorem}[section]
\newtheorem{lemma}[theorem]{Lemma}
\newtheorem{prop}[theorem]{Proposition}
\newtheorem{cor}[theorem]{Corollary}
\newtheorem{ex}[theorem]{Example}
\newtheorem{rem}[theorem]{Remark}
 \renewcommand{\theequation}{%
 \thesection.\arabic{equation}}
\title{Eigenvalue processes of symmetric tridiagonal matrix-valued processes associated with Gaussian beta ensemble}
\author[ ]{Satoshi Yabuoku}
\affil[ ]{\textit{Department of Creative Engineering}}
\affil[ ]{\textit{National Institute of Technology, Kitakyushu College}}
\affil[ ]{\textit{5-20-1 Shii, Kokuraminamiku, Kitakyushu, Fukuoka, 802-0985 Japan
}}
\affil[ ]{\textit{yabuoku@kct.ac.jp}}
\date{\today}
\begin{document}
\allowdisplaybreaks
\maketitle

\begin{abstract}
We consider the symmetric tridiagonal matrix-valued process associated with Gaussian beta ensemble (G$\beta$E) by putting independent Brownian motions and Bessel processes on the diagonal entries and upper (lower)-diagonal ones, respectively. Then, we derive the stochastic differential equations that the eigenvalue processes satisfy, and we show that eigenvalues of their (indexed) principal minor sub-matrices appear in the stochastic differential equations. By the Cauchy's interlacing argument for eigenvalues, we can characterize the sufficient condition that the eigenvalue processes never collide with each other almost surely, by the dimensions of the Bessel processes.
\end{abstract}

\section{Introduction}\ 
In random matrix theory, Dyson \cite{Dyson} introduced symmetric, hermitian and symplectic matrix-valued processes having matrix entries as independent Ornstein-Uhlenbeck processes, and he derived the stochastic differential equations (SDEs for short) that their eigenvalue processes satisfy:
\begin{align}\label{eq:O-Udyson}
d\lambda_i(t)=\sqrt{\frac{2}{\beta}}dB_i(t)+\sum_{j(\neq i)}\frac{1}{\lambda_i(t)-\lambda_j(t)}dt-\frac{\lambda_i(t)}{2}dt,\ \ i=1,\dots,N,
\end{align}
where $B_i, i=1,\dots,N$ are independent one-dimensional standard Brownian motions, and the three matrix symmetries correspond to $\beta=1,2$ and $4$, respectively. The invariant measure of the diffusion process $\lam=(\lambda_1,\dots,\lambda_N)$ satisfying \eqref{eq:O-Udyson} is 
\begin{align}\label{eq:densityofGbetaE}
\mu_{\beta,N}(dx)=\frac{1}{Z_{\beta,N}}e^{-\frac{\beta}{4}\sum_{i=1}^Nx_i^2}\prod_{1\le i<j \le N}|x_i-x_j|^{\beta}\prod_{i=1}^Ndx_i,
\end{align}
and \eqref{eq:densityofGbetaE} is the probability distribution of eigenvalues of GOE, GUE and GSE with $\beta=1,2$ and $4$, respectively \cite{AGZ, Mehta04}. Such a Gibbs measure is called a Coulomb gas because of the logarithmic interactions.
Dyson's results of matrix-valued processes are generalized by Katori and Tanemura \cite{KT04}, and it is proved that when matrix entries are replaced with independent Brownian motions, their eigenvalue processes satisfy the SDEs (up to a time change):
\begin{align}\label{matDyson}
d\lambda_i(t)=\sqrt{2}dB_i(t)+{\beta}\sum_{j(\neq i)}\frac{1}{\lambda_i(t)-\lambda_j(t)}dt,\ \ \beta>0,\ \ i=1,\dots,N.
\end{align}
Here, the three matrix symmetries correspond to $\beta=1,2$ and 4, respectively, also see \cite[Section 4.3]{AGZ}. Dyson \cite{Dyson} also found that the system of independent one-dimensional Brownian motions with the condition that they never collide with each other satisfy \eqref{matDyson}, also see \cite{KT11} and references therein. Such stochastic processes satisfying \eqref{eq:O-Udyson} or \eqref{matDyson} are called Dyson's Brownian motion. In this paper, we refer to Dyson's Brownian motion as a solution of \eqref{matDyson}. The solutions of SDEs \eqref{eq:O-Udyson} and \eqref{matDyson} with general $\beta>0$ are studied. Rogers and Shi \cite{RS93} proved that if $\beta\ge1$, these solutions never collide with each other nor explode almost surely. C\'epa and L\'epingle \cite{CL97} proved the existence of unique strong solution for $\beta>0$ with a reflecting boundary condition. 

Recently, constructions of eigenvalue processes which satisfy SDEs \eqref{matDyson} for general $\beta>0$ are reported. As mentioned above, it is clear for $\beta=1,2$ and 4. Allez and Guionnet \cite{AG13} constructed a sequence of symmetric (hermitian) matrix-valued processes whose eigenvalue processes converge weakly to a stochastic process, and the limiting process satisfies \eqref{matDyson} for $0<\beta\le1$ ($0<\beta \le 2$, respectively) with the Ornstein-Uhlenbeck drift. Holcomb and Paquette consider a symmetric tridiagonal matrix-valued process whose eigenvalue process satisfies \eqref{matDyson} (or \eqref{eq:O-Udyson}) and give a sufficient condition of such matrices in \cite[Theorem 6]{HP17}. Their construction relies on the Lanczos algorithm. Fukushima, Tanida and Yano proved \cite{FTY} that a $2\times 2$ symmetric matrix-valued process, which is defined by putting independent Brownian motions and Bessel processes on diagonal entries and off-diagonal ones, respectively, has the eigenvalue process satisfying \eqref{matDyson} with general $\beta>0$. Note that this symmetric matrix-valued process is a natural time dependent model of Gaussian beta ensemble (G$\beta$E for short), introduced by Dumitriu and Edelman \cite{DE}. G$\beta$E is a well-known model because this is a generalization of GOE, GUE and GSE in the following sense. 
For $\beta>0$, G$\beta$E is an $N \times N $ symmetric tridiagonal matrix $H_{\beta}^{tri}$ defined by
\begin{align}\label{eq:defofGbetaE}
H_{\beta}^{tri}=
\dfrac{1}{\sqrt{\beta}}\begin{pmatrix}
g_1 & \chi_{(N-1)\beta}  & 0   & \cdots & \cdots & 0\\
 \chi_{(N-1)\beta} & g_2 &  \chi_{(N-2)\beta} &  &    & \vdots \\
 0 & \chi_{(N-2)\beta} &    & \ddots & & \vdots\\
 \vdots &  & \ddots&  \ddots&  \chi_{2\beta}& 0\\
 \vdots &   &  & \chi_{2\beta} & g_{N-1} & \chi_{\beta}\\
 0  & \cdots &\cdots  & 0 & \chi_{\beta} & g_N
\end{pmatrix},
\end{align}
where $\{g_i\}_{1\le i \le N}$ are independent Gaussian random variables $\mathcal{N}(0,2)$, and $\{\chi_{k\beta}\}_{1\le k\le N-1}$ are independent $\chi$-distributed random variables with shape parameter $k\beta$ and independent of $\{g_i\}_{1\le i \le N}$. Then, Dumitriu and Edelman \cite{DE} proved that eigenvalue probability distribution of $H_{\beta}^{tri}$ is \eqref{eq:densityofGbetaE} with $\beta>0$. For related topics of G$\beta$E, see \cite[Chapter 20]{ABF11} and \cite[Section 4.5]{AGZ}. 

In this paper, we introduce the $N\times N$ symmetric tridiagonal matrix-valued process $H_{\alpha}(t)$ defined by \eqref{defHt}, associated with G$\beta$E. More precisely, $H_{\alpha}(t)$ is constructed by putting independent Brownian motions and Bessel processes with dimension $\alpha_k,k=1,\dots,N-1$ on diagonal entries and off-diagonal ones, respectively. Note that $H_{\alpha}(t)$ does not meet the sufficient condition that their eigenvalue process satisfies \eqref{matDyson} as reported in \cite{HP17}. In our main result (Theorem \ref{thm1}), we derive the SDEs \eqref{sde1} of eigenvalue process of $H_{\alpha}(t)$ for general $N >0$, and we conclude that several eigenvalues of indexed principal minor sub-matrices of $H_{\alpha}(t)$ appear in the SDEs. Hence, the SDEs that eigenvalue processes of $H_{\alpha}(t)$ satisfy are non-Markov type. Formally, drift terms of the obtained SDEs tell us that the eigenvalues of $H_{\alpha}(t)$ interact with their minor eigenvalues, and these interactions are similar to that of Dyson's Brownian motion \eqref{matDyson}, see Remark \ref{rem1}. Moreover, we characterize the sufficient condition that eigenvalue processes of $H_{\alpha}(t)$ never collide with each other almost surely, by using the dimensions $\alpha_k,k=1,\dots,N-1$ (and starting points) of Bessel processes. As a corollary, for $2\times 2$ (sub-)matrices, we recover the construction of eigenvalue processes done in \cite{FTY}. We remark some related topics for minor eigenvalue processes. In the each case of symmetric, hermitian and symplectic-matrix valued process, namely Dyson's Brownian motion with $\beta=1,2$ and 4 respectively, the eigenvalue process with their minor eigenvalues has been studied in \cite{ANP14}. Adler, Nordenstam and van Moerbeke proved that eigenvalue processes of two consecutive minors for these matrices are diffusion processes, and they derive the SDEs that these processes satisfy. Our SDEs \eqref{sde1} slightly look like their SDEs in \cite[(2.27)]{ANP14}, however, ours are more complicated. They also showed that in the case of three consecutive minors, eigenvalue processes with minor eigenvalues are non-Markov. On the other hand, in the context of non-intersecting diffusion processes, multilevel versions of Dyson's Brownian motions are studied  \cite{GS15} and known as corner process. Note that the eigenvalue processes with their minors and corner processes are very different processes as mentioned in \cite{ANP14}.

The organization of the paper is the following. Preliminaries and our main results are in Section \ref{sec2}. We give proofs of Theorem \ref{thm1} and Corollary \ref{cor:DysoneqofN=2} in Section \ref{sec3}. Appendix provides some linear algebraic tools.

\section{Preliminaries and Main Results}\label{sec2}
Let $\alpha_k, k=1,\dots,N-1$ be fixed positive constants. Then for $\alpha=(\alpha_1,\dots,\alpha_{N-1})$, define the $N \times N$ symmetric tridiagonal matrix-valued process $H_{\alpha}(t)$ as following:
\begin{align}\label{defHt}
H_{\alpha}(t):=
\begin{pmatrix}
\sqrt{2}B_1(t)  & X_{1}(t)  & 0 &  \cdots & \cdots & 0\\
 X_{1}(t) & \sqrt{2}B_2(t) &  X_{2}(t)  &  &  & \vdots \\
 0 & X_{2}(t) & & \ddots & & \vdots\\
 \vdots &  & \ddots& \ddots&  X_{N-2}(t)& 0\\
 \vdots &   &   & X_{N-2}(t) & \sqrt{2}B_{N-1}(t) & X_{N-1}(t)\\
 0  & \cdots &\cdots  & 0 & X_{N-1}(t) & \sqrt{2}B_N(t)
\end{pmatrix},\ \ t\ge0,
\end{align}
where $B_k, k=1,\dots,N$ are independent one-dimensional standard Brownian motions defined on a filtered probability space $(\Omega,\F,\{\F_t\}_{t\ge0},\P)$, and $X_k, k=1,\dots,N-1$ are independent Bessel processes started at $x_k\ge0$ with dimension $\alpha_k$, denoted by ${\rm BES}^{\alpha_k}(x_k)$, which are defined on the same probability space and independent of $\{B_k\}_{1\le k \le N}$. Equivalently, $X_k, k=1,\dots,N-1$ satisfy the following SDEs:
\begin{align*}
\begin{dcases}
& dX_k(t)=dB_{kk+1}(t)+\dfrac{\alpha_k-1}{2}\frac{1}{X_k(t)}dt\\
& X_k(0)=x_k
\end{dcases},
\end{align*}
where $B_{kk+1}, k=1,\dots,N$ are independent one-dimensional standard Brownian motions which are independent of  $\{B_k\}_{1\le k \le N}$. 
Recall that for $x\ge0$, $\alpha >0$, ${\rm BES}^{\alpha}(x)$ is a diffusion process on $[0,\infty)$ which is the solution of
\begin{equation}\label{BES}
\begin{dcases}
& dX(t)=dB(t)+\dfrac{\alpha-1}{2}\frac{1}{X(t)}dt,\ t <T_0^{x,\alpha}\\
& X(0)=x
\end{dcases},
\end{equation}
where $B$ is a one-dimensional standard Brownian motion, and $T_0^{x,\alpha}:=\inf\{t>0; X(t)=0\}$ is the first hitting time of $X$ at the origin. Note that \eqref{BES} is well-defined until $T_0^{x,\alpha}$ and if $x>0, \alpha \ge 2$, the point 0 is polar, that is,
\begin{align}\label{BESp}
T_0^{x,\alpha}=\infty,\ \ a.s., 
\end{align}
so that $X(t), t\in[0,\infty)$ lies in the positive half line $(0, \infty)$ almost surely. On the other hand, if $0<\alpha<2$, $\P(T_0^{x,\alpha}<\infty)=1$ and $X$ is recurrent, see \cite[Chapter X\hspace{-1pt}I]{RY}.  Hence, to make $H_{\alpha}$(t) well-defined, \eqref{defHt} is defined up to time $T_0:=\min_{1\le k \le N-1}T_0^{x_k,\alpha_k}$ when there exists $k$ such that $0<\alpha_k<2$. Note that $H_{\alpha}(t)$ can be seen as a time evolution of G$\beta$E because if we take $\alpha=((N-1)\beta, (N-2)\beta, \dots, 2\beta, \beta)$ for some $\beta>0$, by \eqref{eq:defofGbetaE} we have $\frac{1}{\sqrt{\beta}}(H_{\alpha}(1)-H_{\alpha}(0))=H_{\beta}^{tri}$ in distribution.\\
For $1\le p<q \le N$, denote $H_{\alpha}^{p,q}(t):=\{H_{\alpha, k\ell}(t)\}_{p\le k,\ell\le q}$ as the $(q-p+1)$-principal minor sub-matrix of $H_{\alpha}(t)$. By definition of $H_{\alpha}(t)$, $H_{\alpha}^{p,q}(t), 1\le p<q \le N$ are also symmetric and tridiagonal. Let $\lam(t)=(\lambda_1(t), \dots, \lambda_N(t))$ and $\lam^{p,q}(t)=(\lambda_1^{p,q}(t), \dots, \lambda_{q-p+1}^{p,q}(t)), 1 \le p <q \le N$ be the eigenvalue process of $H_{\alpha}(t)$ and that of $H_{\alpha}^{p,q}(t)$ with increasing order, respectively. Note that $\lam^{1,N}(t)=\lam(t)$. For $1\le p<q\le N$, let $f^{p,q}(\lam^{p,q}(t), \lambda)$ be the characteristic polynomial of $H^{p,q}_{\alpha}(t)$. For $1\le p <q \le N$, define the first collision time of $\lam^{p,q}(t)$ by
\begin{align}\label{colpq}
\T^{p,q}:=\inf\{t >0; \lambda_i^{p,q}(t)=\lambda_j^{p,q}(t)\ {\rm for\ some}\ i \neq j\},
\end{align}
and the first collision time of eigenvalues of $H_{\alpha}(t)$ through the indexed minors by    
\begin{align}\label{col}
\T:=\min_{p,q;\ q-p>1}\T^{p,q}.
\end{align}
We assume the following initial condition: 
\begin{align}\label{inicond}
H_{\alpha}(0)\ {\rm\ has\ simple\ spectrum,\ that\ is,\ } \lambda_1(0)<\dots<\lambda_N(0).
\end{align}
Note that if $x_k>0, k=1,\dots,N-1$, this condition actually holds by the recurrence of determinants of tridiagonal matrices, see Lemma \ref{strsep}. For an $N \times N$ square matrix $A$,\ denote $A_{k|\ell}$ as the $(N-1) \times (N-1)$ minor matrix that is obtained by removing the $k$-th row and the $\ell$-th column from A. Then, we have the main result.

\begin{theorem}\label{thm1}
For fixed positive constants $\alpha=(\alpha_1,\dots,\alpha_{N-1})$, up to time $\TT:=\T\wedge T_0$, the eigenvalue process $\lam(t)=(\lambda_1(t), \dots, \lambda_N(t))$ of $H_{\alpha}(t)$ is a continuous semimartingale and satisfies the SDEs:
\begin{align}\label{sde1}
&d\lambda_i(t)=\sqrt{2}\sum_{k=1}^N\dfrac{f(\lam^{1,k-1}(t), \lambda_i(t))f(\lam^{k+1,N}(t), \lambda_i(t))}{\prod\limits_{j(\neq i)}(\lambda_i(t)-\lambda_j(t))}dB_k(t)\nonumber\\
&+2\sum_{k=1}^{N-1}\dfrac{X_k(t)f(\lam^{1,k-1}(t), \lambda_i(t))f(\lam^{k+2,N}(t), \lambda_i(t))}{\prod\limits_{j(\neq i)}(\lambda_i(t)-\lambda_j(t))}dB_{kk+1}(t)
+2\sum_{j(\neq i)}\dfrac{1}{\lambda_i(t)-\lambda_j(t)}dt\nonumber\\
&+\sum_{k=1}^{N-1}\dfrac{(\alpha_k-2)f(\lam^{1,k-1}(t), \lambda_i(t))f(\lam^{k+2,N}(t), \lambda_i(t))}{\prod\limits_{j(\neq i)}(\lambda_i(t)-\lambda_j(t))}dt\nonumber\\
&+\dfrac{2}{\prod\limits_{j(\neq i)}(\lambda_i(t)-\lambda_j(t))^2}\Biggl\{\biggl(\sum_{j(\neq i)}\dfrac{2}{\lambda_i(t)-\lambda_j(t)}\biggr)\sum_{\ell-k>1}F^{k,\l}(\lam^{1,k-1}(t),\lam^{k+1,N}(t),\lam^{1,\l-1}(t),\lam^{\l+1,N}(t),\lambda_i(t))\nonumber\\
&\hspace{40mm} -\sum_{\ell-k>1}\dfrac{\partial}{\partial \lambda}\left(F^{k,\l}(\lam^{1,k-1}(t),\lam^{k+1,N}(t),\lam^{1,\l-1}(t),\lam^{\l+1,N}(t),\lambda)\right)\left.\right|_{\lambda=\lambda_i(t)}\Biggr\}dt,\nonumber\\
&\hspace{105mm} t<\TT,\ \  1\le i \le N,
\end{align}
where
\begin{align*}
f(\lam^{p,q}(t), \lambda)=f^{p,q}(\lam^{p,q}(t), \lambda):=
\begin{cases}
\prod\limits_{r=1}^{q-p+1}(\lambda-\lambda_r^{p,q}(t))&\ \  1\le p< q \le N\\
\ 1&\ \ (p,q)=(1,0), (N+1,N)
\end{cases}\ \ ,
\end{align*}
and for $\l-k>1$, $F^{k,\l}: \R^{k-1}\times\R^{N-k}\times\R^{\l-1}\times\R^{N-\l}\times\R\to\R$ is a function defined by
\begin{align*}
&F^{k,\l}(\lam^{1,k-1}(t),\lam^{k+1,N}(t),\lam^{1,\l-1}(t),\lam^{\l+1,N}(t),\lambda)\\
&:=f(\lam^{1,k-1}(t), \lambda)f(\lam^{k+1,N}(t), \lambda)(\lam^{1,\l-1}(t), \lambda)f(\lam^{\l+1,N}(t),\lambda)\\
&=\prod_{r=1}^{k-1}(\lambda-\lambda_r^{1,k-1}(t))\prod_{r=1}^{N-k}(\lambda-\lambda_r^{k+1,N}(t))\prod_{r=1}^{\l-1}(\lambda-\lambda_r^{1,\l-1}(t))\prod_{r=1}^{N-\l}(\lambda-\lambda_r^{\l+1,N}(t)).
\end{align*}
The quadratic variations are
\begin{align}\label{quad}
d\langle \lambda_i,\lambda_j\rangle_t
=
\begin{dcases}
\ \ 2\biggl(1-2\dfrac{\sum_{\ell-k>1}F^{k,\l}(\lam^{1,k-1}(t),\lam^{k+1,N}(t),\lam^{1,\l-1}(t),\lam^{\l+1,N}(t),\lambda_i(t))}{\prod\limits_{j(\neq i)}(\lambda_i(t)-\lambda_j(t))^2}\biggr)dt & i=j\\
\ \ \dfrac{-4\sum_{\ell-k>1}\det\left(\lambda_i(t)I_N-H_{\alpha}(t)\right)_{k|\ell}\det\left(\lambda_j(t)I_N-H_{\alpha}(t)\right)_{\ell|k}}{\prod\limits_{p(\neq i)}(\lambda_i(t)-\lambda_p(t))\prod\limits_{q(\neq j)}(\lambda_j(t)-\lambda_q(t))}dt & i\neq j
\end{dcases}.
\end{align}

Moreover, if $\alpha_k\ge 2, x_k>0, k=1,\dots,N-1$, then 
\begin{align}\label{noncol}
\TT=\infty,\ \ a.s.
\end{align}
\end{theorem}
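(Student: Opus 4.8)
The plan is to apply It\^o's formula not to $\lambda_i(t)$ as a function of the matrix entries, but to the identity $\det\!\bigl(\lambda_i(t)I_N-H_{\alpha}(t)\bigr)=0$ — the characteristic polynomial of $H_{\alpha}(t)$ evaluated at one of its roots — and to solve for $d\lambda_i(t)$. This is convenient because the coefficients of \eqref{sde1} are built from characteristic polynomials of principal submatrices, and those polynomials \emph{are} the relevant cofactors of $\lambda I_N-H_{\alpha}(t)$. The linear-algebraic inputs needed — cofactor factorizations for tridiagonal matrices, Jacobi's formula $\partial_\lambda\det(\lambda I-H)=\sum_k\det\bigl((\lambda I-H)_{k|k}\bigr)$, and the Desnanot--Jacobi (Dodgson) condensation identity — are exactly the tools prepared in the Appendix; the main work is to reassemble the output into \eqref{sde1}--\eqref{quad}.

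\emph{Setup.} On $[0,\TT)$ one has $\TT\le\T^{1,N}\wedge T_0$, so $H_{\alpha}(t)$ is a continuous semimartingale with simple spectrum there; hence each ordered $\lambda_i(t)$ is a real-analytic function of the entries and, after the usual localization, a continuous semimartingale. Moreover each $f(\lam^{p,q}(t),\lambda)=\det\bigl(\lambda I-H_{\alpha}^{p,q}(t)\bigr)$ appearing in \eqref{sde1} is a polynomial in the entries of $H_{\alpha}(t)$, so the right-hand side of \eqref{sde1} is a well-defined continuous semimartingale on $[0,\TT)$. From \eqref{defHt} the only nonzero covariations among the entries of $H_{\alpha}(t)$ are $d\langle(H_{\alpha})_{kk}\rangle_t=2\,dt$ and $d\langle(H_{\alpha})_{k,k+1}\rangle_t=dt$, and $(H_{\alpha})_{k,k+1}=X_k$ has drift $\tfrac{\alpha_k-1}{2X_k}\,dt$. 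Writing $g(\lambda,H):=\det(\lambda I_N-H)$ and applying It\^o's formula to $g(\lambda_i(t),H_{\alpha}(t))\equiv0$, then dividing by $\partial_\lambda g(\lambda_i,H_{\alpha}(t))=\prod_{j(\neq i)}(\lambda_i-\lambda_j)\neq0$ and using $\partial^2_{H_{kk}}g\equiv0$ (since $g$ is affine in each diagonal entry), expresses $d\lambda_i$ through $\partial_{H_{k\ell}}g$, $\partial_\lambda^2g$, $\partial_\lambda\partial_{H_{k\ell}}g$, $\partial^2_{H_{k,k+1}}g$ and the covariations above. The Appendix gives $\partial_{H_{kk}}g=-f(\lam^{1,k-1},\lambda)f(\lam^{k+1,N},\lambda)$, $\partial_{H_{k,k+1}}g=-2X_k f(\lam^{1,k-1},\lambda)f(\lam^{k+2,N},\lambda)$, $\partial^2_{H_{k,k+1}}g=-2f(\lam^{1,k-1},\lambda)f(\lam^{k+2,N},\lambda)$, and $\partial_\lambda\partial_{H_{k\ell}}g=\partial_\lambda\bigl(\partial_{H_{k\ell}}g\bigr)$; substituting, the two martingale sums of \eqref{sde1} appear at once, and what remains is a drift of the form $\sum_k\alpha_k\frac{f(\lam^{1,k-1},\lambda_i)f(\lam^{k+2,N},\lambda_i)}{\prod_{j(\neq i)}(\lambda_i-\lambda_j)}\,dt$ plus the $\tfrac12\partial_\lambda^2g\,d\langle\lambda_i\rangle_t$ and $\sum_{k\le\ell}\partial_\lambda\partial_{H_{k\ell}}g\,d\langle\lambda_i,(H_{\alpha})_{k\ell}\rangle_t$ It\^o corrections, in which $d\langle\lambda_i\rangle_t$ and $d\langle\lambda_i,(H_{\alpha})_{k\ell}\rangle_t$ are already determined by the martingale part.

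\emph{The main step is then purely algebraic.} Differentiating Jacobi's formula once gives $\tfrac12 f''(\lambda)=\sum_{k=1}^{N-1}f(\lam^{1,k-1},\lambda)f(\lam^{k+2,N},\lambda)+\sum_{b-a>1}f(\lam^{1,a-1},\lambda)f(\lam^{a+1,b-1},\lambda)f(\lam^{b+1,N},\lambda)$; together with $f'(\lambda_i)=\prod_{j(\neq i)}(\lambda_i-\lambda_j)$ and $\tfrac12 f''(\lambda_i)=\bigl(\prod_{j(\neq i)}(\lambda_i-\lambda_j)\bigr)\sum_{j(\neq i)}\frac1{\lambda_i-\lambda_j}$, this lets me trade $\sum_k\frac{f(\lam^{1,k-1},\lambda_i)f(\lam^{k+2,N},\lambda_i)}{\prod_{j(\neq i)}(\lambda_i-\lambda_j)}$ for the Dyson term $\sum_{j(\neq i)}\frac1{\lambda_i-\lambda_j}$, producing $2\sum_{j(\neq i)}\frac1{\lambda_i-\lambda_j}$, $\sum_k(\alpha_k-2)(\cdots)$, and residual three-block sums. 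These residuals, together with the It\^o corrections, are assembled by the Desnanot--Jacobi identity applied to $\lambda I_N-H_{\alpha}(t)$ at an index pair $\{k,\ell\}$, which reads $F^{k,\ell}(\cdots,\lambda)=\det\bigl(\lambda I_N-H_{\alpha}(t)\bigr)\,f(\lam^{1,k-1},\lambda)f(\lam^{k+1,\ell-1},\lambda)f(\lam^{\ell+1,N},\lambda)+\bigl(\det(\lambda I_N-H_{\alpha}(t))_{k|\ell}\bigr)^2$, with the cofactor factorization $\det(\lambda I_N-H_{\alpha}(t))_{k|\ell}=\pm\bigl(\prod_{r=k}^{\ell-1}X_r\bigr)f(\lam^{1,k-1},\lambda)f(\lam^{\ell+1,N},\lambda)$. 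Every term carrying the factor $\det(\lambda I_N-H_{\alpha}(t))$ vanishes at $\lambda=\lambda_i$; in particular $F^{k,\ell}(\cdots,\lambda_i)=\bigl(\det(\lambda_iI_N-H_{\alpha}(t))_{k|\ell}\bigr)^2$, which is precisely why \eqref{sde1} and the $i=j$ line of \eqref{quad} may be written with the $X$-free four-factor form of $F^{k,\ell}$, whereas the $\partial_\lambda F^{k,\ell}$-term in \eqref{sde1} genuinely depends on this choice of representative. The $i\neq j$ line of \eqref{quad} falls out the same way, from $\tr\bigl(\mathrm{adj}(\lambda_iI_N-H_{\alpha})\,\mathrm{adj}(\lambda_jI_N-H_{\alpha})\bigr)=\bigl(\prod_{p(\neq i)}(\lambda_i-\lambda_p)\bigr)\bigl(\prod_{q(\neq j)}(\lambda_j-\lambda_q)\bigr)\langle u^{(i)},u^{(j)}\rangle^2=0$, where $u^{(i)}$ is the unit $\lambda_i$-eigenvector. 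The main obstacle will be precisely this reassembly — keeping the signs straight and spotting the two condensation identities inside the pile of It\^o corrections; checking the pattern for $N=2$ (which is Corollary \ref{cor:DysoneqofN=2}, recovering \cite{FTY}) and $N=3$ before stating the general formula is the natural safeguard.

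\emph{Non-collision.} If $\alpha_k\ge2$ and $x_k>0$ for every $k$, then \eqref{BESp} gives $T_0=\infty$ a.s.\ and each $X_k(t)>0$ for all $t\ge0$; hence for each $1\le p<q\le N$ and every $t$, $H_{\alpha}^{p,q}(t)$ is a symmetric tridiagonal matrix with strictly positive off-diagonal entries, i.e.\ a Jacobi matrix, and therefore has simple spectrum (the mechanism behind Lemma \ref{strsep} and the strict Cauchy interlacing). Thus $\T^{p,q}=\infty$ a.s.\ for every $p<q$, so $\T=\min_{p,q;\,q-p>1}\T^{p,q}=\infty$ and $\TT=\T\wedge T_0=\infty$ a.s.; by the first part, \eqref{sde1}--\eqref{quad} then hold for all $t\ge0$.
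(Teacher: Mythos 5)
Your proposal is correct and follows essentially the same route as the paper: It\^o's formula applied through the implicit relation $\det(\lambda_i I_N-H_\alpha(t))=0$ is exactly the paper's implicit-function-theorem computation, and the linear-algebraic inputs you invoke (block-diagonal cofactor factorizations of the tridiagonal matrix, Jacobi's formula for $f_\lambda$ and $f_{\lambda\lambda}$, the Desnanot--Jacobi/Sylvester condensation giving $F^{k,\ell}(\cdots,\lambda_i)=\det((\lambda_iI_N-H_\alpha)_{k|\ell})^2\ge0$, and strict Cauchy interlacing for non-collision) are precisely Lemmas A.1--A.6 and Proposition \ref{prop1}. The only cosmetic difference is that you obtain the vanishing of $d\langle\lambda_i,\lambda_j\rangle_t$ for $i\neq j$ from the rank-one structure of the adjugate at a simple eigenvalue rather than from the Cauchy--Binet step the paper uses; both reduce to the same identity, and your remark that the $\partial_\lambda F^{k,\ell}$ term requires the specific four-factor polynomial representative is exactly the point the paper's calculation relies on.
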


\begin{rem}\label{rem1}
The property of the paths of ${\rm BES}^{\alpha}(x)$ mentioned as \eqref{BESp} is essential to show \eqref{noncol}, see Lemma \ref{strsep}. If $\lambda_i(t)$ and $\lam^{p,q}(t)$ never collide with each other, then we have
\begin{align*}
&\dfrac{\partial}{\partial \lambda}f^{p,q}(\lam^{p,q}(t),\lambda)\left.\right|_{\lambda=\lambda_i(t)}=
f^{p,q}(\lam^{p,q}(t),\lambda_i(t))\sum\limits_{r=1}^{q-p+1}\dfrac{1}{\lambda_i(t)-\lambda_r^{p,q}(t)},
\end{align*}
so that the last term in \eqref{sde1} formally becomes
\begin{align}\label{fmdr}
&\dfrac{2}{\prod\limits_{j(\neq i)}(\lambda_i(t)-\lambda_j(t))^2}\Biggl\{\biggl(\sum_{j(\neq i)}\dfrac{2}{\lambda_i(t)-\lambda_j(t)}\biggr)\sum_{\ell-k>1}F^{k,\l}(\lam^{1,k-1}(t),\lam^{k+1,N}(t),\lam^{1,\l-1}(t),\lam^{\l+1,N}(t),\lambda_i(t))\nonumber\\
&\hspace{40mm} -\sum_{\ell-k>1}\dfrac{\partial}{\partial \lambda}\left(F^{k,\l}(\lam^{1,k-1}(t),\lam^{k+1,N}(t),\lam^{1,\l-1}(t),\lam^{\l+1,N}(t),\lambda)\right)\left.\right|_{\lambda=\lambda_i(t)}\Biggr\}\nonumber\\
&=\dfrac{2}{\prod\limits_{j(\neq i)}(\lambda_i(t)-\lambda_j(t))^2}\sum_{\l-k>1}F^{k,\l}(\lam^{1,k-1}(t),\lam^{k+1,N}(t),\lam^{1,\l-1}(t),\lam^{\l+1,N}(t),\lambda_i(t))\nonumber\\
&\times\Biggl(\sum_{j(\neq i)}\dfrac{2}{\lambda_i(t)-\lambda_j(t)}-\sum_{r=1}^{k-1}\dfrac{1}{\lambda_i(t)-\lambda_r^{1,k-1}(t)}-\sum_{r=1}^{N-k}\dfrac{1}{\lambda_i(t)-\lambda_r^{k+1,N}(t)}\nonumber\\
&\hspace{60mm}-\sum_{r=1}^{\l-1}\dfrac{1}{\lambda_i(t)-\lambda_r^{1,\l-1}(t)}-\sum_{r=1}^{N-\l}\dfrac{1}{\lambda_i(t)-\lambda_r^{\l+1,N}(t)}\Biggr). 
\end{align}
This formal expression \eqref{fmdr} is suitable to show that the eigenvalue process $\lam(t)=(\lambda_1(t), \dots, \lambda_N(t))$ of $H_{\alpha}(t)$ interact with their minor eigenvalues, and these interactions are somewhat similar to that of Dyson's Brownian motions as in \eqref{matDyson}. However, \eqref{fmdr} is not well-defined since $\lam(t)$ may collide with their minor eigenvalues. Indeed, when $\alpha_k\ge2, x_k>0, k=1,\dots,N-1$, any consecutive minor eigenvalues $\lam^{p,q}(t),\ \lam^{p,q+1}(t), 1\le p<q<N-1$ never collide almost surely by \eqref{noncol}, but, for example, $\lam^{p,q-1}(t)$ and $\lam^{p,q+1}(t)$ may collide with each other at finite time.
\end{rem}

From the quadratic variations of eigenvalues \eqref{quad}, we have the following relation between the square of difference product of eigenvalues with respect to $\lambda_i(t)$, $\prod_{j(\neq i)}(\lambda_i(t)-\lambda_j(t))^2$, which is \textit{not} the square of the Vandermonde determinant $\det(\{\lambda_i(t)^{j-1}\}_{1\le i,j \le N})=\prod_{i<j}(\lambda_i(t)-\lambda_j(t))$, and difference products of minor eigenvalues with respect to $\lambda_i(t)$:
\begin{align}\label{iden}
\prod\limits_{j(\neq i)}(\lambda_i(t)-\lambda_j(t))^2
&=\sum_{k=1}^Nf(\lam^{1,k-1}(t), \lambda_i(t))^2f(\lam^{k+1,N}(t), \lambda_i(t))^2\nonumber\\
&+2\sum_{k=1}^{N-1}X_k(t)^2f(\lam^{1,k-1}(t), \lambda_i(t))^2f(\lam^{k+2,N}(t), \lambda_i(t))^2\nonumber\\
&+2\sum_{\ell-k>1}F^{k,\l}(\lam^{1,k-1}(t),\lam^{k+1,N}(t),\lam^{1,\l-1}(t),\lam^{\l+1,N}(t),\lambda_i(t)),\nonumber\\
&\hspace{65mm} t<\TT,\ \ 1\le i \le N.
\end{align}
From the non-negativity of $F^{k,\l}(\lam^{1,k-1}(t),\lam^{k+1,N}(t),\lam^{1,\l-1}(t),\lam^{\l+1,N}(t),\lambda_i(t))$ by \eqref{ineq}, this equality \eqref{iden} shows that the diffusion coefficients in \eqref{sde1}  is bounded up to $\TT$. 

\begin{cor}\label{cor1}
For $1\le i, k \le N$, 
\begin{align}\label{coef}
&\left|\dfrac{f(\lam^{1,k-1}(t), \lambda_i(t))f(\lam^{k+1,N}(t), \lambda_i(t))}{\prod\limits_{j(\neq i)}(\lambda_i(t)-\lambda_j(t))}\right|, \ \left|\dfrac{\sqrt{2}X_k(t)f(\lam^{1,k-1}(t), \lambda_i(t))f(\lam^{k+2,N}(t), \lambda_i(t))}{\prod\limits_{j(\neq i)}(\lambda_i(t)-\lambda_j(t))}\right|\ <1,\nonumber\\
&\hspace{120mm} t< \TT.
\end{align}
\end{cor}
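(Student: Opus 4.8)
The plan is to read the bound off directly from identity \eqref{iden}. On its right-hand side the summands $f(\lam^{1,k-1}(t),\lambda_i(t))^2 f(\lam^{k+1,N}(t),\lambda_i(t))^2$ and $2X_k(t)^2 f(\lam^{1,k-1}(t),\lambda_i(t))^2 f(\lam^{k+2,N}(t),\lambda_i(t))^2$ are non-negative by inspection, and the terms $2F^{k,\l}(\cdots,\lambda_i(t))$ are non-negative by \eqref{ineq}. For $t<\TT$ the matrix $H_{\alpha}(t)$ has simple spectrum — in fact every $H_{\alpha}^{p,q}(t)$ does: $t<\T$ handles the sub-matrices of size at least three, while $t<T_0$ keeps the $2\times2$ blocks non-degenerate — so $\prod_{j(\neq i)}(\lambda_i(t)-\lambda_j(t))^2>0$. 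Dividing \eqref{iden} through by this quantity exhibits $1$ as a finite sum of non-negative terms, two families of which are precisely the squares of the two expressions in \eqref{coef}. Consequently each of those expressions has absolute value at most $1$, which already gives \eqref{coef} with $\le$ in place of $<$.

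To upgrade this to a strict inequality I would argue by contradiction: if one of the two quantities in \eqref{coef} had absolute value exactly $1$, then every other summand on the right-hand side of \eqref{iden} would have to vanish, and in particular both the $k=1$ term $f(\lam^{2,N}(t),\lambda_i(t))^2$ and the $k=N$ term $f(\lam^{1,N-1}(t),\lambda_i(t))^2$ of the first sum would vanish. However, for $t<\TT\le T_0$ every off-diagonal entry $X_k(t)$ is strictly positive (cf.\ Lemma \ref{strsep}), so $H_{\alpha}(t)$ is an irreducible symmetric tridiagonal matrix; by the strict Cauchy interlacing for such matrices (see the Appendix) its eigenvalues strictly interlace those of the principal submatrices $H_{\alpha}^{2,N}(t)$ and $H_{\alpha}^{1,N-1}(t)$ obtained by deleting the first, respectively the last, coordinate. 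Hence $\lambda_i(t)$ is never an eigenvalue of $H_{\alpha}^{2,N}(t)$ or of $H_{\alpha}^{1,N-1}(t)$, i.e.\ $f(\lam^{2,N}(t),\lambda_i(t))\neq0$ and $f(\lam^{1,N-1}(t),\lambda_i(t))\neq0$. Since $N\ge2$ forces at least one of the indices $1,N$ to be distinct from the given $k$ (and the $X_k$-type term belongs to a different family of summands), this contradicts the vanishing just deduced, so both quantities in \eqref{coef} are strictly less than $1$ for $t<\TT$.

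The whole argument is essentially immediate once \eqref{iden} and \eqref{ineq} are available, and the only point needing a little care is the strictness, which relies on the positivity of the off-diagonal entries on $[0,\TT)$ together with strict interlacing for Jacobi matrices. I would also record the more transparent eigenvector reading of \eqref{coef}, which reproves the strict bound cleanly: writing $v^{(i)}(t)$ for a unit eigenvector of $H_{\alpha}(t)$ associated with $\lambda_i(t)$, cofactor expansion of the resolvent gives $f(\lam^{1,k-1}(t),\lambda_i(t))f(\lam^{k+1,N}(t),\lambda_i(t))=\det\big((\lambda_i(t)I_N-H_{\alpha}(t))_{k|k}\big)=\big(\textstyle\prod_{j(\neq i)}(\lambda_i(t)-\lambda_j(t))\big)v^{(i)}_k(t)^2$, and, by Dodgson condensation, $X_k(t)f(\lam^{1,k-1}(t),\lambda_i(t))f(\lam^{k+2,N}(t),\lambda_i(t))=\pm\big(\textstyle\prod_{j(\neq i)}(\lambda_i(t)-\lambda_j(t))\big)v^{(i)}_k(t)v^{(i)}_{k+1}(t)$. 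Thus the two expressions in \eqref{coef} equal $v^{(i)}_k(t)^2$ and $\sqrt2\,|v^{(i)}_k(t)v^{(i)}_{k+1}(t)|$ respectively; then $v^{(i)}_k(t)^2<\sum_m v^{(i)}_m(t)^2=1$, because $v^{(i)}_1(t)\neq0$ and $v^{(i)}_N(t)\neq0$ for an irreducible tridiagonal matrix and at least one of these indices differs from $k$, while $2v^{(i)}_k(t)^2v^{(i)}_{k+1}(t)^2\le\tfrac12\big(v^{(i)}_k(t)^2+v^{(i)}_{k+1}(t)^2\big)^2\le\tfrac12<1$.
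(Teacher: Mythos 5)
Your first paragraph is exactly the paper's argument: divide the identity \eqref{iden} by $\prod_{j(\neq i)}(\lambda_i(t)-\lambda_j(t))^2>0$ and observe that $1$ is then a sum of non-negative terms, two of which are the squares of the expressions in \eqref{coef}. The paper stops there, so your contradiction argument for the \emph{strict} inequality actually fills a gap the paper leaves implicit, and it is sound: for $t<\TT$ all off-diagonal entries are positive, so the strong Cauchy interlacing of Lemma \ref{strsep} (applied to the leading and, by index reversal, the trailing $(N-1)$-minor) makes the $k=1$ and $k=N$ summands $f(\lam^{2,N}(t),\lambda_i(t))^2$ and $f(\lam^{1,N-1}(t),\lambda_i(t))^2$ strictly positive, and at least one of them is a summand distinct from the one under consideration. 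Your closing eigenvector identity, giving the two quantities as $v^{(i)}_k(t)^2$ and $\sqrt{2}\,|v^{(i)}_k(t)v^{(i)}_{k+1}(t)|$, is a genuinely different and arguably more transparent route (it even yields the sharper bound $\sqrt{2}/2$ for the second quantity), though it is not what the paper does.
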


Of cause, by \eqref{iden}, $0 \le \frac{2\sum_{\ell-k>1}F^{k,\l}(\lam^{1,k-1}(t),\lam^{k+1,N}(t),\lam^{1,\l-1}(t),\lam^{\l+1,N}(t),\lambda_i(t))}{\prod\limits_{j(\neq i)}(\lambda_i(t)-\lambda_j(t))^2}\le1,\ 1\le i \le N,\ t< \TT$ holds. We remark that Corollary \ref{cor1} is not trivial. For example, take $N=4, i=2, k=3$ and consider the first term in \eqref{coef}, that is, $\frac{(\lambda_2(t)-\lambda_1^{1,2}(t))(\lambda_2(t)-\lambda_2^{1,2}(t))(\lambda_2(t)-\lambda_1^{4,4}(t))}{(\lambda_2(t)-\lambda_1(t))(\lambda_2(t)-\lambda_3(t))(\lambda_2(t)-\lambda_4(t))}$. Here, $\lambda_1^{4,4}(t)=\lambda_1^{4,4}(0)+\sqrt{2}B_{44}(t)$ where $\lambda_1^{4,4}(0)$ is a deterministic constant determined by $x_k, k=1,\dots, N-1$. Then, by the inclusion principle of minor eigenvalues in \cite[Theorem 4.3.28]{HJ}, we have $\lambda_1(t) \le \lambda_1^{1,2}(t) \le \lambda_3(t),\ \lambda_2(t) \le \lambda_2^{1,2}(t) \le \lambda_4(t)$ and $\lambda_1(t) \le \lambda_1^{4,4}(t) \le \lambda_4(t)$. These inequalities lead $|(\lambda_2(t)-\lambda_1^{1,2}(t))(\lambda_2(t)-\lambda_2^{1,2}(t))(\lambda_2(t)-\lambda_1^{4,4}(t))| \le |\lambda_2(t)-\lambda_4(t)|\times\max\{|\lambda_2(t)-\lambda_1(t)|, |\lambda_2(t)-\lambda_3(t)|\}\times\max\{|\lambda_2(t)-\lambda_1(t)|, |\lambda_2(t)-\lambda_4(t)|\}$. Nevertheless, these estimates are not enough to show that $\left|\frac{(\lambda_2(t)-\lambda_1^{1,2}(t))(\lambda_2(t)-\lambda_2^{1,2}(t))(\lambda_2(t)-\lambda_1^{4,4}(t))}{(\lambda_2(t)-\lambda_1(t))(\lambda_2(t)-\lambda_3(t))(\lambda_2(t)-\lambda_4(t))}\right| <1$. Indeed, the tridiagonality of $H_{\alpha}(t)$ is essential to obtain the equation \eqref{iden}, see subsection \ref{subsec3.2} and Remark \ref{rem3}.

Theorem \ref{thm1} and the above observations show $\langle \lambda_i\rangle_t \le 2t$. Roughly speaking, this fact implies that the speeds of eigenvalue processes of $H_{\alpha}(t)$ are smaller than that of Dyson's Brownian motion as in \eqref{matDyson}, and the difference of their speeds is characterized by using minor eigenvalues:
\begin{align*}
2t-\langle \lambda_i\rangle_t=2\int_0^t\frac{2\sum_{\ell-k>1}F^{k,\l}(\lam^{1,k-1}(s),\lam^{k+1,N}(s),\lam^{1,\l-1}(s),\lam^{\l+1,N}(s),\lambda_i(s))}{\prod\limits_{j(\neq i)}(\lambda_i(t)-\lambda_j(s))^2}dt.
\end{align*}
Note that if $\lambda_i(t)$ gets close to other eigenvalues $\lambda_j(t), j\neq i$, then the numerator vanishes by \eqref{iden}. This is valid for the Cauchy's interlacing arguments of minor eigenvalues.

At least, comparing \eqref{sde1} with \eqref{matDyson}, we conclude that the eigenvalue processes of symmetric tridiagonal matrices are very different from Dyson's Brownian motions. However, only when the size of (principal minor sub-)matrices is two, the eigenvalue process $\lam^{p,p+1}(t)=(\lambda_1^{p,p+1}(t), \lambda_2^{p,p+1}(t))$ satisfies \eqref{matDyson}. Note that this fact is already known in \cite{FTY}. Here, we give another proof of the claim by using Theorem \ref{thm1}. Take $p \in \{1,\dots,N-1\}$ and consider the $2\times 2$ principal minor sub-matrix $H_{\alpha_p}^{p,p+1}(t)=
\begin{pmatrix}
\sqrt{2}B_p(t)  & X_{p}(t)\\
 X_{p}(t) & \sqrt{2}B_{p+1}(t)
\end{pmatrix}$
, where $X_p$ is a Bessel process ${\rm BES}^{\alpha_p}(x_p)$. 

\begin{cor}\label{cor:DysoneqofN=2}
Suppose $x_p>0,\  p=1,\dots, N-1$. Then, $\lam^{p,p+1}(t)=(\lambda_1^{p,p+1}(t), \lambda_2^{p,p+1}(t)), t < \TT$ is a  continuous semimartingale and satisfies the SDEs \eqref{matDyson} with $\beta=\alpha_p>0$:
\begin{eqnarray}\label{eq:DysoneqofN=2}
\begin{aligned}
d\lambda_1^{p,p+1}(t)&=\sqrt{2}dB_1^{p,p+1}(t)+\alpha_p\frac{1}{\lambda_1^{p,p+1}(t)-\lambda_2^{p,p+1}(t)}dt,\\
d\lambda_2^{p,p+1}(t)&=\sqrt{2}dB_2^{p,p+1}(t)+\alpha_p\frac{1}{\lambda_2^{p,p+1}(t)-\lambda_1^{p,p+1}(t)}dt,
\end{aligned}
\hspace{10mm}t<\TT,
\end{eqnarray}
where $-x_p=\lambda_1^{p,p+1}(0)<\lambda_2^{p,p+1}(0)=x_p$, and $B_1^{p,p+1}, B_2^{p,p+1}$ are independent one-dimensional standard Brownian motions.
\end{cor}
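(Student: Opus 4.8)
\emph{Proof plan.} The idea is simply to apply Theorem~\ref{thm1} with $N=2$ to the process $H^{p,p+1}_{\alpha_p}(t)=\bigl(\begin{smallmatrix}\sqrt2 B_p(t)&X_p(t)\\ X_p(t)&\sqrt2 B_{p+1}(t)\end{smallmatrix}\bigr)$, which is itself an instance of \eqref{defHt} in dimension $2$, with diagonal Brownian motions $B_p,B_{p+1}$, single off-diagonal Bessel process $X_p\sim{\rm BES}^{\alpha_p}(x_p)$, and auxiliary Brownian motion $B_{pp+1}$ (these are independent standard Brownian motions as required). The key simplification is that for $N=2$ there is no index pair $1\le k<\l\le 2$ with $\l-k>1$, so \emph{every} term containing an $F^{k,\l}$ in \eqref{sde1} and in \eqref{quad} is vacuous --- this is precisely the non-Markovian part of the dynamics. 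The only nontrivial minor characteristic polynomials that survive are $f(\lam^{1,1}(t),\lambda)=\lambda-\sqrt2 B_p(t)$ and $f(\lam^{2,2}(t),\lambda)=\lambda-\sqrt2 B_{p+1}(t)$, the others being $\equiv 1$. Substituting into \eqref{sde1}, with $\{i,j\}=\{1,2\}$, I expect to get
\begin{align*}
d\lambda^{p,p+1}_i(t)&=\sqrt2\,\frac{\lambda^{p,p+1}_i(t)-\sqrt2 B_{p+1}(t)}{\lambda^{p,p+1}_i(t)-\lambda^{p,p+1}_j(t)}\,dB_p(t)+\sqrt2\,\frac{\lambda^{p,p+1}_i(t)-\sqrt2 B_p(t)}{\lambda^{p,p+1}_i(t)-\lambda^{p,p+1}_j(t)}\,dB_{p+1}(t)\\
&\quad+\frac{2X_p(t)}{\lambda^{p,p+1}_i(t)-\lambda^{p,p+1}_j(t)}\,dB_{pp+1}(t)+\frac{2+(\alpha_p-2)}{\lambda^{p,p+1}_i(t)-\lambda^{p,p+1}_j(t)}\,dt,
\end{align*}
in which the drift already matches \eqref{eq:DysoneqofN=2}: the $2$ from the Dyson-type term and the shift $-2$ from the Bessel correction cancel, leaving $\alpha_p/(\lambda^{p,p+1}_i(t)-\lambda^{p,p+1}_j(t))$.

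Next I would write $M_i(t)$ for the local-martingale part (the three stochastic integrals above) and identify it. From \eqref{quad} specialized to $N=2$ --- the $F^{k,\l}$-term again being absent --- one reads off $d\langle\lambda^{p,p+1}_i\rangle_t=2\,dt$ and $d\langle\lambda^{p,p+1}_1,\lambda^{p,p+1}_2\rangle_t=0$; the former is exactly the $N=2$ case of identity \eqref{iden}, namely $(\lambda^{p,p+1}_i(t)-\lambda^{p,p+1}_j(t))^2=(\lambda^{p,p+1}_i(t)-\sqrt2 B_p(t))^2+(\lambda^{p,p+1}_i(t)-\sqrt2 B_{p+1}(t))^2+2X_p(t)^2$, applied to the explicit quadratic variation of $M_i$. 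Since the finite-variation drift contributes nothing, $(\tfrac1{\sqrt2}M_1,\tfrac1{\sqrt2}M_2)$ is then a continuous local martingale with covariation $\delta_{ij}\,t$, so by L\'evy's characterization it is a standard two-dimensional Brownian motion; naming its components $B^{p,p+1}_1,B^{p,p+1}_2$ gives $M_i=\sqrt2\,B^{p,p+1}_i$ with $B^{p,p+1}_1,B^{p,p+1}_2$ independent one-dimensional standard Brownian motions, which is \eqref{eq:DysoneqofN=2}.

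It remains to deal with the initial value, the time horizon, and non-degeneracy of the coefficients. Since $x_p>0$, at $t=0$ the matrix is $\bigl(\begin{smallmatrix}0&x_p\\ x_p&0\end{smallmatrix}\bigr)$, with eigenvalues $-x_p<x_p$, so $\lambda^{p,p+1}_1(0)=-x_p$, $\lambda^{p,p+1}_2(0)=x_p$, and \eqref{inicond} holds for this $2\times2$ process. Applying Theorem~\ref{thm1} with $N=2$, the horizon $\T\wedge T_0$ becomes $T_0^{x_p,\alpha_p}$ (for $N=2$ the index set in \eqref{col} is empty, hence $\T=+\infty$, while $T_0=T_0^{x_p,\alpha_p}$), so the reduced SDE holds on $[0,T_0^{x_p,\alpha_p})$; since the global time $\TT$ in the statement satisfies $\TT\le T_0=\min_kT_0^{x_k,\alpha_k}\le T_0^{x_p,\alpha_p}$, \eqref{eq:DysoneqofN=2} holds up to $\TT$. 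Finally, on $\{t<T_0^{x_p,\alpha_p}\}$ one has $X_p(t)>0$, whence
\begin{align*}
\lambda^{p,p+1}_2(t)-\lambda^{p,p+1}_1(t)=2\sqrt{\tfrac12\bigl(B_p(t)-B_{p+1}(t)\bigr)^2+X_p(t)^2}\ \ge\ 2X_p(t)\ >\ 0,
\end{align*}
so the drift coefficient in \eqref{eq:DysoneqofN=2} never explodes before $\TT$ and $\lam^{p,p+1}$ is a genuine continuous semimartingale there --- this also explains why no separate collision time for the $2\times2$ minor needs to be subtracted.

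Given Theorem~\ref{thm1}, there is no serious obstacle: the content is the observation that for $N=2$ the non-Markovian $F^{k,\l}$-terms are vacuous and the two drift constants $2$ and $\alpha_p-2$ combine to $\alpha_p$. The one computational point that deserves care is the verification, via the $N=2$ instance of \eqref{iden}, that $\langle M_i\rangle_t=2t$ while $\langle M_1,M_2\rangle_t=0$: this is exactly what upgrades the reduced equation to the standard Dyson form \eqref{matDyson} driven by \emph{independent} Brownian motions, and recovers the construction of \cite{FTY}.
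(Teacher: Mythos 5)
Your proposal is correct and follows essentially the same route as the paper: specialize the general SDE \eqref{sde2} to the $2\times 2$ block, observe that the $F^{k,\l}$-terms are vacuous and that the drift constants $2$ and $\alpha_p-2$ combine to $\alpha_p$, then read off $d\langle\lambda_i,\lambda_j\rangle_t=2\delta_{ij}\,dt$ from \eqref{quad} and apply L\'evy's characterization. Your extra checks (the explicit eigenvalue gap $2\sqrt{\tfrac12(B_p-B_{p+1})^2+X_p^2}$ and the $N=2$ instance of \eqref{iden}) are correct but not needed beyond what the paper's argument already supplies.
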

The proof of Corollary \ref{cor:DysoneqofN=2} lies in subsection \ref{subsec3.3}.

Since each of principal minor sub-matrices $H_{\alpha}^{p,q}(t),\ 1\le p< q \le N,$ are symmetric and tridiagonal, their eigenvalue processes $\lam^{p,q}(t)=(\lambda_1^{p,q}(t), \dots, \lambda_{q-p+1}^{p,q}(t))$ have the same structure of $\lam^{1,N}(t)=\lam(t)$. More precisely, for $1\le p<q \le N$, up to time $\TT$,\ $\lam^{p,q}(t)$ satisfies the following SDEs that are general forms of \eqref{sde1}:
\begin{align}\label{sde2}
&d\lambda_i^{p,q}(t)=\sqrt{2}\sum_{k=p}^q\dfrac{f(\lam^{p,k-1}(t), \lambda_i^{p,q}(t))f(\lam^{k+1,q}(t), \lambda_i^{p,q}(t))}{\prod\limits_{j(\neq i)}(\lambda_i^{p,q}(t)-\lambda_j^{p,q}(t))}dB_k(t)\nonumber\\
&+2\sum_{k=p}^{q-1}\dfrac{X_k(t)f(\lam^{p,k-1}(t), \lambda_i^{p,q}(t))f(\lam^{k+2,q}(t), \lambda_i^{p,q}(t))}{\prod\limits_{j(\neq i)}(\lambda_i^{p,q}(t)-\lambda_j^{p,q}(t))}dB_{kk+1}(t)
+2\sum_{j(\neq i)}\dfrac{1}{\lambda_i^{p,q}(t)-\lambda_j^{p,q}(t)}dt\nonumber\\
&+\sum_{k=p}^{q-1}\dfrac{(\alpha_k-2)f(\lam^{p,k-1}(t), \lambda_i^{p,q}(t))f(\lam^{k+2,q}(t), \lambda_i^{p,q}(t))}{\prod\limits_{j(\neq i)}(\lambda_i^{p,q}(t)-\lambda_j^{p,q}(t))}dt\nonumber\\
&+\dfrac{2}{\prod\limits_{j(\neq i)}(\lambda_i^{p,q}(t)-\lambda_j^{p,q}(t))^2}\nonumber\\
&\times\Biggl\{\biggl(\sum_{j(\neq i)}\dfrac{2}{\lambda_i^{p,q}(t)-\lambda_j^{p,q}(t)}\biggr)\sum_{\substack{p\le k< \l \le q,\\ \ell-k>1}}F^{k,\l}(\lam^{p,k-1}(t),\lam^{k+1,q}(t),\lam^{p,\l-1}(t),\lam^{\l+1,q}(t),\lambda_i^{p,q}(t))\nonumber\\
&\hspace{28mm} -\sum_{\substack{p\le k< \l \le q,\\ \ell-k>1}}\dfrac{\partial}{\partial \lambda}\left(F^{k,\l}(\lam^{p,k-1}(t),\lam^{k+1,q}(t),\lam^{p,\l-1}(t),\lam^{\l+1,q}(t),\lambda)\right)\left.\right|_{\lambda=\lambda_i^{p,q}(t)}\Biggr\}dt,\nonumber\\
&\hspace{90mm} t<\TT,\ \  1\le i \le q-p+1.
\end{align}

\begin{ex}[eigenvalue process for $N=3$]
Take $N=3$ and consider eigenvalue process $\lam(t)\equiv\lam^{1,3}(t)$ of $H_{\alpha}(t)\equiv H_{\alpha}^{1,3}(t)$ with $\alpha=(\alpha_1,\alpha_2)$. Theorem \ref{thm1} and Corollary \ref{cor:DysoneqofN=2} state that the system of $(\lam^{p,q}(t),\ 1\le p \le q \le 3,\ t<\TT)$ satisfies the following system of SDEs:
\begin{align*}
d\lambda_1^{p,p}(t)&=\sqrt{2}dB_p(t), \ \ p=1,2,3,\\
\lam^{1,2}(t)&=(\lambda_1^{1,2}(t), \lambda_2^{1,2}(t))\ {\rm and}\  \lam^{2,3}(t)=(\lambda_1^{2,3}(t), \lambda_2^{2,3}(t))\ {\rm satisfy}\ \eqref{eq:DysoneqofN=2},\\
d\lambda_i(t)&=\frac{\sqrt{2}}{\prod\limits_{j(\neq i)}(\lambda_i(t)-\lambda_j(t))}
 \Bigl\{(\lambda_i(t)-\lambda_1^{2,3}(t))(\lambda_i(t)-\lambda_2^{2,3}(t))dB_1(t)\\
&+(\lambda_i(t)-\lambda_1^{1,1}(t))(\lambda_i(t)-\lambda_1^{3,3}(t))dB_2(t)
+(\lambda_i(t)-\lambda_1^{1,2}(t))(\lambda_i(t)-\lambda_2^{1,2}(t))dB_3(t)\\
&\hspace{20mm}+\sqrt{2}X_1(t)(\lambda_i(t)-\lambda_1^{3,3}(t))dB_{12}(t)
+\sqrt{2}X_2(t)(\lambda_i(t)-\lambda_1^{1,1}(t))dB_{23}(t)
\Bigr\}\\
&+2\sum_{j(\neq i)}\dfrac{1}{\lambda_i(t)-\lambda_j(t)}dt
+\frac{(a_1-2)(\lambda_i(t)-\lambda_1^{3,3}(t))+(a_2-2)(\lambda_i(t)-\lambda_1^{1,1}(t))}{\prod\limits_{j(\neq i)}(\lambda_i(t)-\lambda_j(t))}dt\\
&+\dfrac{2}{\prod\limits_{j(\neq i)}(\lambda_i(t)-\lambda_j(t))^2}\\
&\times\Biggl\{\biggl(\sum_{j(\neq i)}\dfrac{2}{\lambda_i(t)-\lambda_j(t)}\biggr)(\lambda_i(t)-\lambda_1^{2,3}(t))(\lambda_i(t)-\lambda_2^{2,3}(t))(\lambda_i(t)-\lambda_1^{1,2}(t))(\lambda_i(t)-\lambda_2^{1,2}(t))\\
&\hspace{10mm}-(\lambda_i(t)-\lambda_2^{2,3}(t))(\lambda_i(t)-\lambda_1^{1,2}(t))(\lambda_i(t)-\lambda_2^{1,2}(t))\\
&\hspace{10mm}-(\lambda_i(t)-\lambda_1^{2,3}(t))(\lambda_i(t)-\lambda_1^{1,2}(t))(\lambda_i(t)-\lambda_2^{1,2}(t))\\
&\hspace{10mm}-(\lambda_i(t)-\lambda_1^{2,3}(t))(\lambda_i(t)-\lambda_2^{2,3}(t))(\lambda_i(t)-\lambda_2^{1,2}(t))\\
&\hspace{10mm}-(\lambda_i(t)-\lambda_1^{2,3}(t))(\lambda_i(t)-\lambda_2^{2,3}(t))(\lambda_i(t)-\lambda_1^{1,2}(t))\Biggr\}dt,\ \ t<\TT,\ \  1\le i \le 3.
\end{align*}
\end{ex}


\section{Proofs of main results}\label{sec3}
\subsection{Proof of Theorem \ref{thm1}}\label{subsec3.1}
The proof of Theorem \ref{thm1} is similar to that used in our previous paper \cite{Y}. We first derive the SDEs \eqref{sde1} by the implicit function theorem until the first collision time $t \in [0,\TT)$. And then, we show that \eqref{noncol} whenever $\alpha_k\ge 2, x_k>0, k=1,\dots,N-1$. The detail calculations are summarized in subsection \ref{subsec3.2}. Define the $N \times N$ symmetric tridiagonal deterministic matrix $H=\{H_{k\ell}\}_{1\le k,\ell \le N}$ as
\begin{align}\label{3.1}
H:=
\begin{pmatrix}
\sqrt{2}x_1 & y_{12}  & 0  & \cdots & \cdots & 0\\
y_{12} & \sqrt{2}x_2 &  y_{23}  &  &  & \vdots \\
0 & y_{23} &  &  \ddots & & \vdots\\
\vdots & & \ddots& \ \ddots&  y_{N-2N-1}& 0\\
\vdots &   &   & y_{N-2N-1} & \sqrt{2}x_{N-1} & y_{N-1N}\\
 0  & \cdots &\cdots  & 0 & y_{N-1N} & \sqrt{2}x_N
\end{pmatrix},
\end{align}
and its characteristic polynomial of $H$ as $f(\lambda):=\det(\lambda I_N-H)$. Let $\lambda_1\le\dots\le\lambda_N$ be eigenvalues of $H$, and assume that $H$ has simple spectrum. Then, for $i=1,\dots,N$ we have  
\begin{align*}
f_{\lambda}(\lambda_i)=\prod_{j(\neq i)}(\lambda_i-\lambda_j)\neq0,
\end{align*} 
where $f_{\eta}$ is the partial derivative of $f$ with respect to $\eta$. Hence for $i=1,\dots,N$, we apply the implicit function theorem for $\lambda_i$ and obtain the first and second derivatives of them as following:
\begin{align}\label{derilam}
\dfrac{\partial \lambda_i}{\partial \eta}=-\dfrac{f_\eta(\lambda_i)}{f_\lambda(\lambda_i)},\ \ 
\frac{\partial^2 \lambda_i}{\partial \eta^2}=-\frac{f_{\eta\eta}(\lambda_i)+2f_{\lambda\eta}(\lambda_i)\lambda_{i,\eta}+f_{\lambda\lambda}(\lambda_i)\lambda_{i, \eta}^2}{f_\lambda(\lambda_i)}.
\end{align}
For second derivatives of $\lambda_i$, taking the summation for $\eta=x_k, k=1,\dots, N$ and $y_{kk+1}, k=1,\dots,N-1$, we have
\begin{align}\label{lap1}
\Delta \lambda_i=-\frac{\Delta f(\lambda_i)+2\nabla f_{\lambda}(\lambda_i)\cdot \nabla \lambda_i+f_{\lambda\lambda}(\lambda_i)\nabla \lambda_i \cdot \nabla \lambda_i}{f_\lambda(\lambda_i)},
\end{align}
where for a $C^2$ function $g: \R^{2N-1} \to \R$, we denote the gradient and Laplacian of $g$ by
\begin{align*}
\nabla g :=\left(\frac{\partial g}{\partial x_{1}},\frac{\partial g}{\partial x_{2}},\dots,\frac{\partial g}{\partial x_{N}},\frac{\partial g}{\partial y_{12}},\dots,\frac{\partial g}{\partial y_{N-1 N}}\right),\ \ 
\Delta g:=\sum_{k=1}^N\frac{\partial^2 g}{\partial x_{k}^2}+\sum_{k=1}^{N-1}+\frac{\partial^2 g}{\partial y_{kk+1}^2}.
\end{align*}
By using the tridiagonality of $H$, we can calculate \eqref{lap1} and obtain an explicit form.
Denote $I_N$ as an $N \times N$ identity matrix. Recall that for an $N \times N$ square matrix $A$,\ let $A_{k|\ell}$ be the $(N-1) \times (N-1)$ minor matrix that is obtained by removing the $k$-th row and the $\ell$-th column from A. Similarly, let $A_{k\ell |pq}$ be the $(N-2) \times (N-2)$ minor matrix that is obtained by removing the $k,\ell$-th rows and the $p,q$-th columns from $A$. For $1\le p<q\le N$, let $f^{p,q}(\lambda)$ be the characteristic polynomial of $H^{p,q}$. 

\begin{prop}\label{prop1}
For $i=1,\dots,N$, we have
\begin{align}\label{lap2}
\Delta \lambda_i=&\dfrac{2f_{\lambda\lambda}(\lambda_i)}{f_{\lambda}(\lambda_i)}-\dfrac{2}{f_{\lambda}(\lambda_i)}\sum_{k=1}^{N-1}\det\left((\lambda_i I_N-H)_{kk+1|kk+1}\right)\nonumber\\
&+\frac{4}{f_{\lambda}(\lambda_i)^2}\Biggl(\frac{f_{\lambda\lambda}(\lambda_i)}{f_{\lambda}(\lambda_i)}\sum_{\ell-k>1}f^{1,k-1}(\lambda_i)f^{k+1,N}(\lambda_i)f^{1,\ell-1}(\lambda_i)f^{\ell+1,N}(\lambda_i)\nonumber\\
&-\sum_{\ell-k>1}\dfrac{\partial}{\partial \lambda}\left(f^{1,k-1}(\lambda)f^{k+1,N}(\lambda)f^{1,\ell-1}(\lambda)f^{\ell+1,N}(\lambda)\right)\left.\right|_{\lambda=\lambda_i}\Biggr).
\end{align}
\end{prop}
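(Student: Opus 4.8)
The plan is to evaluate the right‑hand side of \eqref{lap1} term by term, expressing $\Delta f(\lambda_i)$, $\nabla f_{\lambda}(\lambda_i)\cdot\nabla\lambda_i$ and $f_{\lambda\lambda}(\lambda_i)\,\nabla\lambda_i\cdot\nabla\lambda_i$ through the minor characteristic polynomials $f^{p,q}$, and then to reassemble the pieces into \eqref{lap2}. Throughout, the assumed simplicity of the spectrum of $H$ guarantees $f_{\lambda}(\lambda_i)=\prod_{j(\neq i)}(\lambda_i-\lambda_j)\neq 0$, so every quotient below is legitimate. The first input is the partial derivatives of $f(\lambda)=\det(\lambda I_N-H)$ with respect to the entries: from the cofactor formula $\partial\det(M)/\partial M_{ab}=(-1)^{a+b}\det(M_{a|b})$ applied to $M=\lambda I_N-H$, together with the fact that deleting one index (resp.\ two adjacent indices) from the rows and columns of a tridiagonal matrix yields a block‑diagonal matrix --- these are the linear‑algebraic identities collected in the Appendix --- one obtains
\[
\frac{\partial f}{\partial x_k}(\lambda)=-\sqrt{2}\,\det\big((\lambda I_N-H)_{k|k}\big)=-\sqrt{2}\,f^{1,k-1}(\lambda)f^{k+1,N}(\lambda),
\]
\[
\frac{\partial f}{\partial y_{kk+1}}(\lambda)=2\,\det\big((\lambda I_N-H)_{k|k+1}\big)=-2y_{kk+1}\,f^{1,k-1}(\lambda)f^{k+2,N}(\lambda),
\]
with the conventions $f^{1,0}=f^{N+1,N}=1$. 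By \eqref{derilam} this determines $\nabla\lambda_i$; squaring and summing gives
\[
|\nabla f(\lambda)|^2=2\sum_{k=1}^N\big(f^{1,k-1}(\lambda)f^{k+1,N}(\lambda)\big)^2+4\sum_{k=1}^{N-1}y_{kk+1}^2\big(f^{1,k-1}(\lambda)f^{k+2,N}(\lambda)\big)^2,\qquad |\nabla\lambda_i|^2=\frac{|\nabla f(\lambda_i)|^2}{f_{\lambda}(\lambda_i)^2}.
\]

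Second, I would compute $\Delta f$. Since $\det$ is affine‑linear in each individual matrix entry, $\partial^2 f/\partial x_k^2=0$; and the ``cut at $(k,k+1)$'' identity
\[
f(\lambda)=f^{1,k}(\lambda)f^{k+1,N}(\lambda)-y_{kk+1}^2\,f^{1,k-1}(\lambda)f^{k+2,N}(\lambda)
\]
(a further Appendix lemma, following from the three‑term recurrence of tridiagonal characteristic polynomials) exhibits $f$ as a quadratic in $y_{kk+1}$ with leading coefficient $-f^{1,k-1}f^{k+2,N}$, so $\partial^2 f/\partial y_{kk+1}^2=-2f^{1,k-1}(\lambda)f^{k+2,N}(\lambda)=-2\det\big((\lambda I_N-H)_{kk+1|kk+1}\big)$. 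Hence $\Delta f(\lambda)=-2\sum_{k=1}^{N-1}\det\big((\lambda I_N-H)_{kk+1|kk+1}\big)$.

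The crux is a polynomial identity for $|\nabla f|^2$. Starting from $f_{\lambda}(\lambda)={\rm tr}\,{\rm adj}(\lambda I_N-H)=\sum_{m=1}^N f^{1,m-1}(\lambda)f^{m+1,N}(\lambda)$, I would square this expression, split the resulting double sum into its diagonal, nearest‑neighbour ($m=k+1$) and distant ($m-k>1$) parts, and rewrite each nearest‑neighbour cross term by means of the cut identity as $\big(f^{1,k-1}f^{k+1,N}\big)\big(f^{1,k}f^{k+2,N}\big)=y_{kk+1}^2\big(f^{1,k-1}f^{k+2,N}\big)^2+f\cdot f^{1,k-1}f^{k+2,N}$. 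Comparing with the formula for $|\nabla f|^2$ above then yields
\[
|\nabla f(\lambda)|^2=2f_{\lambda}(\lambda)^2-4\sum_{\l-k>1}f^{1,k-1}(\lambda)f^{k+1,N}(\lambda)f^{1,\l-1}(\lambda)f^{\l+1,N}(\lambda)-4f(\lambda)\sum_{k=1}^{N-1}f^{1,k-1}(\lambda)f^{k+2,N}(\lambda).
\]
Evaluating at $\lambda=\lambda_i$ the last term drops (this recovers \eqref{iden}, equivalently $|\nabla\lambda_i|^2=2-4\sum_{\l-k>1}F^{k,\l}(\lambda_i)/f_{\lambda}(\lambda_i)^2$), while differentiating this identity in $\lambda$ and then setting $\lambda=\lambda_i$ expresses $\frac{d}{d\lambda}|\nabla f(\lambda)|^2\big|_{\lambda=\lambda_i}$ through $f_{\lambda}(\lambda_i)f_{\lambda\lambda}(\lambda_i)$, $\sum_{\l-k>1}\partial_{\lambda}F^{k,\l}\big|_{\lambda=\lambda_i}$, and $\sum_{k}\det\big((\lambda_i I_N-H)_{kk+1|kk+1}\big)$.

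It remains to assemble \eqref{lap2}. Since $\partial_{\eta}f_{\lambda}=\partial_{\lambda}\partial_{\eta}f$, the mixed term of \eqref{lap1} satisfies $2\nabla f_{\lambda}(\lambda_i)\cdot\nabla\lambda_i=-f_{\lambda}(\lambda_i)^{-1}\,\frac{d}{d\lambda}|\nabla f(\lambda)|^2\big|_{\lambda=\lambda_i}$. Inserting this, together with the expressions for $\Delta f(\lambda_i)$ and $f_{\lambda\lambda}(\lambda_i)|\nabla\lambda_i|^2$, into \eqref{lap1}, and then collecting the coefficient of $f_{\lambda\lambda}(\lambda_i)/f_{\lambda}(\lambda_i)$, of $\det\big((\lambda_i I_N-H)_{kk+1|kk+1}\big)/f_{\lambda}(\lambda_i)$, and of the remaining higher‑order terms, produces exactly \eqref{lap2}. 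The main obstacle is the polynomial identity for $|\nabla f|^2$: it is the one place where the tridiagonality of $H$ is genuinely exploited, and it is essential there to keep track of the $f(\lambda)$‑proportional remainder, which vanishes at each eigenvalue but is needed in order to differentiate the identity correctly before specialising to $\lambda=\lambda_i$. Everything else is elementary bookkeeping, provided one respects the edge conventions $f^{1,0}=f^{N+1,N}=1$.
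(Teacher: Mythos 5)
Your proposal is correct and follows essentially the same route as the paper: it computes the entry derivatives of $f$, establishes $\Delta f(\lambda)=-2\sum_{k}\det\bigl((\lambda I_N-H)_{kk+1|kk+1}\bigr)$, proves the key quadratic identity relating $\nabla f\cdot\nabla f$ to $f_\lambda^2$, the distant-minor products and an $f$-proportional remainder (the paper's Lemma \ref{lem3.3}), differentiates it in $\lambda$, and assembles \eqref{lap1} into \eqref{lap2}. The only cosmetic difference is that you obtain the nearest-neighbour cross terms from the three-term ``cut'' recurrence $f=f^{1,k}f^{k+1,N}-y_{kk+1}^2f^{1,k-1}f^{k+2,N}$, whereas the paper invokes Sylvester's identity; for a tridiagonal $H$ these are the same identity up to the factor $f^{1,k-1}f^{k+2,N}$.
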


The proof of Proposition \ref{prop1} is in subsection \ref{subsec3.2}. Now we derive the SDEs \eqref{sde1}. Since $H_{\alpha}(t)$ has simple spectrum up to the time $\T$, we are able to use \eqref{derilam}, \eqref{lap1}, \eqref{deriflam} and \eqref{derifxy} in subsection \ref{subsec3.2}. Applying Ito's formula for $\lambda_i(t), i=1,\dots,N$ with $x_k=B_k(t), k=1,\dots,N, y_{kk+1}=X_k(t), k=1,\dots,N-1$, for $t \in [0,\TT)$ we have 
\begin{align}\label{a1}
d\lambda_i(t)=&\sum_{k=1}^N\dfrac{\partial \lambda_i}{\partial x_k}dB_{k}(t)+\sum_{k=1}^{N-1}\dfrac{\partial \lambda_i}{\partial y_{kk+1}}\Bigl(dB_{kk+1}(t)+\dfrac{\alpha_k-1}{2X_k(t)}dt\Bigr)
+\frac{1}{2}\Delta \lambda_i(t)dt\nonumber\\
=&\sqrt{2}\sum_{k=1}^N\dfrac{\det\left((\lambda_i(t) I_N-H_{\alpha}(t))_{k|k}\right)}{f_{\lambda}(\lambda_i(t))}dB_{k}(t)\nonumber\\
&+2\sum_{k=1}^{N-1}\dfrac{X_k(t)\det\left((\lambda_i(t) I_N-H_{\alpha}(t))_{kk+1|kk+1}\right)}{f_{\lambda}(\lambda_i(t))}dB_{kk+1}(t)\nonumber\\
&+\sum_{k=1}^{N-1}\dfrac{(\alpha_k-1)\det\left((\lambda_i(t) I_N-H_{\alpha}(t))_{kk+1|kk+1}\right)}{f_{\lambda}(\lambda_i(t))}dt+\frac{1}{2}\Delta \lambda_i(t)dt. 
\end{align}
By Proposition \ref {prop1} and Lemma \ref{lemA1}, 
\begin{align}\label{a2}
&\dfrac{({\rm drift\ term\ of\ }d\lambda_i(t))}{dt}\nonumber\\
&=\sum_{k=1}^{N-1}\dfrac{(\alpha_k-1)\det\left((\lambda_i(t) I_N-H_{\alpha}(t))_{kk+1|kk+1}\right)}{f_{\lambda}(\lambda_i(t))}
-\dfrac{1}{f_{\lambda}(\lambda_i(t))}\sum_{k=1}^{N-1}\det\left((\lambda_i(t) I_N-H_{\alpha}(t))_{kk+1|kk+1}\right)\nonumber\\
&+\dfrac{f_{\lambda\lambda}(\lambda_i(t))}{f_{\lambda}(\lambda_i(t))}+\frac{2}{f_{\lambda}(\lambda_i(t))^2}\Biggl(\frac{f_{\lambda\lambda}(\lambda_i(t))}{f_{\lambda}(\lambda_i(t))}\sum_{\ell-k>1}f^{1,k-1}(\lambda_i(t))f^{k+1,N}(\lambda_i(t))f^{1,\ell-1}(\lambda_i(t))f^{\ell+1,N}(\lambda_i(t))\nonumber\\
&\ \ \ \ \ \ \ \ \ \ \ \ \ \ \ \ \ \ \ \ \ \ \ \ \ \ \ \ \ \ \ \ \ \ \ \ \ -\sum_{\ell-k>1}\dfrac{\partial}{\partial \lambda}\left(f^{1,k-1}(\lambda)f^{k+1,N}(\lambda)f^{1,\ell-1}(\lambda)f^{\ell+1,N}(\lambda)\right)\left.\right|_{\lambda=\lambda_i}\Biggr)\nonumber\\
&=2\sum_{j(\neq i)}\dfrac{1}{\lambda_i(t)-\lambda_j(t)}+\sum_{k=1}^{N-1}\dfrac{(\alpha_k-2)\det\left((\lambda_i(t) I_N-H_{\alpha}(t))_{kk+1|kk+1}\right)}{f_{\lambda}(\lambda_i(t))}\nonumber\\
&+\frac{2}{f_{\lambda}(\lambda_i(t))^2}\Biggl(\biggl(2\sum_{j(\neq i)}\dfrac{1}{\lambda_i(t)-\lambda_j(t)}\biggr)\sum_{\ell-k>1}f^{1,k-1}(\lambda_i(t))f^{k+1,N}(\lambda_i(t))f^{1,\ell-1}(\lambda_i(t))f^{\ell+1,N}(\lambda_i(t))\nonumber\\
&\ \ \ \ \ \ \ \ \ \ \ \ \ \ \ \ \ \ \ \ \ \ -\sum_{\ell-k>1}\dfrac{\partial}{\partial \lambda}\left(f^{1,k-1}(\lambda)f^{k+1,N}(\lambda)f^{1,\ell-1}(\lambda)f^{\ell+1,N}(\lambda)\right)\left.\right|_{\lambda=\lambda_i(t)}\Biggr).
\end{align}
To obtain \eqref{sde1}, we note that $(\lambda_i(t) I_N-H_{\alpha}(t))_{k|k}$ and $(\lambda_i(t) I_N-H_{\alpha}(t))_{kk+1|kk+1}$ are block diagonal matrices, that is, $(\lambda_i(t) I_N-H_{\alpha}(t))_{k|k}={\rm diag}(\lambda_i(t) I_{k-1}-H_{\alpha}^{1,k-1}(t), \lambda_i(t) I_{N-k}-H_{\alpha}^{k+1,N}(t))$ and $(\lambda_i(t) I_N-H_{\alpha}(t))_{kk+1|kk+1}={\rm diag}(\lambda_i(t) I_{k-1}-H_{\alpha}^{1,k-1}(t), \lambda_i(t) I_{N-k-1}-H_{\alpha}^{k+2,N}(t))$. Hence, we have
\begin{align*}
\det\left((\lambda_i(t) I_N-H_{\alpha}(t))_{k|k}\right)&=f^{1,k-1}(\lam^{1,k-1}(t), \lambda_i(t))f^{k+1,N}(\lam^{k+1,N}(t), \lambda_i(t)),\\
\det\left((\lambda_i(t) I_N-H_{\alpha}(t))_{kk+1|kk+1}\right)&=f^{1,k-1}(\lam^{1,k-1}(t), \lambda_i(t))f^{k+2,N}(\lam^{k+2,N}(t), \lambda_i(t)).
\end{align*}
Therefore, combining \eqref{a1}, \eqref{a2} and the equation $f_{\lambda}(\lambda_i(t))=\prod_{j(\neq i)}(\lambda_i(t)-\lambda_j(t))$, we obtain the SDEs \eqref{sde1} until  the time $\TT$.

Next, we show \eqref{quad}. By \eqref{a1} and \eqref{derifxy} in subsection \ref{subsec3.2}, 
\begin{align}\label{b1}
d\langle \lambda_i,\lambda_j\rangle_t
&=\dfrac{2}{f_{\lambda}(\lambda_i(t))f_{\lambda}(\lambda_j(t))}
\Biggl(\sum_{k=1}^N\det\left((\lambda_i(t) I_N-H_{\alpha}(t))_{k|k}\right)\det\left((\lambda_j(t) I_N-H_{\alpha}(t))_{k|k}\right)\nonumber\\
&+2\sum_{k=1}^{N-1}\det\left((\lambda_i(t) I_N-H_{\alpha}(t))_{k|k+1}\right)\det\left((\lambda_j(t) I_N-H_{\alpha}(t))_{k|k+1}\right)\biggr)dt.
\end{align}
By symmetry of $H_{\alpha}(t)$ and Lemma \ref{lemA4} (the Cauchy-Binet formula) we have
\begin{align}\label{b2}
\Bigl({\rm two\ summations\ in\ \eqref{b1}}\Bigr)
=\sum_{k=1}^N&\det\left((\lambda_i(t) I_N-H_{\alpha}(t)(\lambda_j(t) I_N-H_{\alpha}(t))_{k|k}\right)\nonumber\\
-2\sum_{\l-k>1}&\det\left((\lambda_i(t) I_N-H_{\alpha}(t))_{k|\l}\right)\det\left((\lambda_j(t) I_N-H_{\alpha}(t))_{\l|k}\right).
\end{align}
For the first summation in \eqref{b2}, since $(\lambda_i(t) I_N-H_{\alpha}(t)(\lambda_j(t) I_N-H_{\alpha}(t)$ has eigenvalues $(\lambda_i(t)-\lambda_p(t))(\lambda_j(t)-\lambda_p(t)),\ p=1,\dots, N$, by Lemma \ref{lemA2} we have
\begin{align}\label{b3}
\sum_{k=1}^N\det\left((\lambda_i(t) I_N-H_{\alpha}(t)(\lambda_j(t) I_N-H_{\alpha}(t))_{k|k}\right)&=\sum_{k=1}^N\prod_{p(\neq k)}(\lambda_i(t)-\lambda_p(t))(\lambda_j(t)-\lambda_p(t))\nonumber\\
&=
\begin{cases}
f_{\lambda}(\lambda_i(t))^2 & i=j\\
0 & i \neq j
\end{cases}.
\end{align}
For the second summation in \eqref{b2}, if $i=j$, by Lemma \ref{lemA5} (the Sylvester's identity) we have
\begin{align*}
\begin{vmatrix}
\det\left((\lambda_i(t) I_N-H_{\alpha}(t))_{k|k}\right) &\det\left((\lambda_i(t) I_N-H_{\alpha}(t))_{k|\l}\right) \\
\det\left((\lambda_i(t) I_N-H_{\alpha}(t))_{\l|k}\right) & \det\left((\lambda_i(t) I_N-H_{\alpha}(t))_{\l|\l}\right)
\end{vmatrix}&=f(\lambda_i(t))\det\left((\lambda_i(t) I_N-H_{\alpha}(t))_{k\l|k\l}\right)\\
&=0,
\end{align*}
which gives
\begin{align}\label{b4}
&\det\left((\lambda_i(t) I_N-H_{\alpha}(t))_{k|\l}\right)\det\left((\lambda_i(t) I_N-H_{\alpha}(t))_{\l|k}\right)\nonumber\\
&=\det\left((\lambda_i(t) I_N-H_{\alpha}(t))_{k|k}\right)\det\left((\lambda_i(t) I_N-H_{\alpha}(t))_{\l|\l}\right)\nonumber\\
&=f^{1,k-1}(\lam^{1,k-1}(t), \lambda_i(t))f^{k+1,N}(\lam^{k+1,N}(t), \lambda_i(t))f^{1,\l-1}(\lam^{1,\l-1}(t), \lambda_i(t))f^{\l+1,N}(\lam^{\l+1,N}(t), \lambda_i(t))\nonumber\\
&=F^{k,\l}(\lam^{1,k-1}(t),\lam^{k+1,N}(t),\lam^{1,\l-1}(t),\lam^{\l+1,N}(t),\lambda_i(t)).
\end{align}
Note that by symmetry of $H_{\alpha}(t)$, $\det\left((\lambda_i(t) I_N-H_{\alpha}(t))_{\l|k}\right)=\det\left((\lambda_i(t) I_N-H_{\alpha}(t))_{k|\l}\right)$. Hence, the following non-trivial inequalities with respect to eigenvalues and their minor eigenvalues hold: for $k,\l$ such that $\l-k>1$,
\begin{align}\label{ineq}
F^{k,\l}(\lam^{1,k-1}(t),\lam^{k+1,N}(t),\lam^{1,\l-1}(t),\lam^{\l+1,N}(t),\lambda_i(t)) \ge 0.
\end{align} 
Combining \eqref{b1}-\eqref{b4}, we obtain \eqref{quad}.

\begin{rem}\label{rem2}
By a similar way as the above proof and calculation, we can generalize our main results by taking continuous semimartingales as matrix entries of $H_{\alpha}(t)$. Note that the drift term in \eqref{sde1} can be described by using \textit{only} eigenvalues of $H_{\alpha}(t)$ and those of minors. This occurs by taking Bessel processes as the upper (lower)-diagonal entries of $H_{\alpha}(t)$. Therefore, the choice of matrix entries of $H_{\alpha}(t)$ as \eqref{defHt} is natural and also reasonable from the view point of random matrix theory and the SDEs that $\lam(t)$ satisfies.
\end{rem}

Finally, we show that $\TT$ is infinity almost surely whenever $\alpha_k\ge 2, x_k>0, k=1,\dots,N-1$. By \eqref{BESp}, for all $k=1,\dots,N-1$ we have $T_0^{x_k,\alpha_k}=\infty,$ a.s., so that $T_0=\infty,$ a.s. Combining this, continuity of eigenvalues and Lemma \ref{strsep}, for all $1\le p<q\le N$ we have the strong separation argument for $\lam^{p,q}(t)=(\lambda_1^{p,q}(t), \dots, \lambda_{q-p+1}^{p,q}(t))$, that is, 
\begin{align*}
\lambda_1^{p,q}(t)<\lambda_2^{p,q}(t)<\dots<\lambda_{q-p+1}^{p,q}(t), \ t\in (0,\infty),\ \ a.s.,
\end{align*}
which gives $\T^{p,q}=\infty,$ a.s., defined in $\eqref{colpq}$. Therefore, we also have $\T=\infty,$ a.s., and the claim holds. The above discussion and next subsection \ref{subsec3.2} complete the proof of Theorem \ref{thm1}.


\subsection{Proof of Proposition \ref{prop1}}\label{subsec3.2}
To show Proposition \ref{prop1}, we first compute derivatives of $f$ with respect to $\lambda, x_k,$ and $ y_{kk+1}$. As a result of the implicit function theorem in \eqref{derilam}, we can compute $\Delta \lambda_i$ as in \eqref{lap1}. Note that in this subsection, we heavily rely on linear algebra and matrix analysis, so that we summarize basic tools in Appendix.
\begin{prop}\label{derif}
For variables $x_k, k=1,\dots,N$ and $y_{kk+1}, k=1,\dots,N-1$ in \eqref{3.1}, we have
\begin{eqnarray}
& & \begin{aligned}\label{deriflam}
&f_{\lambda}(\lambda)=\sum_{k=1}^N\det\left((\lambda I_N-H)_{k|k}\right),\ 
f_{\lambda\lambda}(\lambda)=2\sum_{k<\ell}\det\left((\lambda I_N-H)_{k\ell | k\ell}\right),
\end{aligned}\\
& & \begin{aligned}\label{derifxy}
&f_{x_k}(\lambda)=-\sqrt{2}\det\left((\lambda I_N-H)_{k|k}\right), f_{x_kx_k}(\lambda)=0,\\
&f_{y_{kk+1}}(\lambda)=2\det\left((\lambda I_N-H)_{k|k+1}\right)=-2y_{kk+1}\det\left((\lambda I_N-H)_{kk+1|kk+1}\right),\\
&f_{y_{kk+1}y_{kk+1}}(\lambda)=-2\det\left((\lambda I_N-H)_{kk+1|kk+1}\right).
\end{aligned}
\end{eqnarray}
\end{prop}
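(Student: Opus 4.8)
The plan is to reduce every formula to the classical fact (Jacobi's formula, equivalently the cofactor expansion) that the partial derivative of a determinant with respect to one of its entries is the corresponding signed cofactor: for an $N\times N$ matrix $A$, $\partial\det A/\partial A_{pq}=(-1)^{p+q}\det(A_{p|q})$, the point being that the cofactor $(-1)^{p+q}\det(A_{p|q})$ does not involve the entry $A_{pq}$ itself. Since each entry of $\lambda I_N-H$ is affine in each of the variables $\lambda,x_1,\dots,x_N,y_{12},\dots,y_{N-1N}$, the chain rule turns every first derivative of $f(\lambda)=\det(\lambda I_N-H)$ into a signed sum of $(N-1)\times(N-1)$ minors, and each second derivative that we need is obtained by differentiating once more.

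First I would handle $f_\lambda$ and $f_{\lambda\lambda}$. The variable $\lambda$ appears only on the diagonal of $\lambda I_N-H$, with coefficient $1$ in each diagonal slot, so $f_\lambda(\lambda)=\sum_{k=1}^N(-1)^{2k}\det((\lambda I_N-H)_{k|k})=\sum_{k=1}^N\det((\lambda I_N-H)_{k|k})$. Differentiating again, and noting that the diagonal entries of the minor $(\lambda I_N-H)_{k|k}$ are indexed by $\{1,\dots,N\}\setminus\{k\}$, deleting the one of index $\ell$ produces $(\lambda I_N-H)_{k\ell|k\ell}$; summing over ordered pairs $(k,\ell)$ with $k\ne\ell$ and using $\det((\lambda I_N-H)_{k\ell|k\ell})=\det((\lambda I_N-H)_{\ell k|\ell k})$ yields $f_{\lambda\lambda}(\lambda)=2\sum_{k<\ell}\det((\lambda I_N-H)_{k\ell|k\ell})$.

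The $x_k$-identities are immediate: $\sqrt2\,x_k$ occupies only the $(k,k)$ slot of $H$, hence $f_{x_k}(\lambda)=-\sqrt2\,\det((\lambda I_N-H)_{k|k})$, and the determinant is affine in this single entry, so $f_{x_kx_k}=0$. For $y_{kk+1}$ one must account for the two symmetric slots $(k,k+1)$ and $(k+1,k)$, each with coefficient $-1$ in $\lambda I_N-H$; the chain rule together with the symmetry of $H$ (which gives $\det((\lambda I_N-H)_{k+1|k})=\det((\lambda I_N-H)_{k|k+1})$) then produces $f_{y_{kk+1}}(\lambda)=2\det((\lambda I_N-H)_{k|k+1})$.

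The one genuinely structural point, and the step I expect to require the most care, is the factorization $\det((\lambda I_N-H)_{k|k+1})=-y_{kk+1}\det((\lambda I_N-H)_{kk+1|kk+1})$, which is where tridiagonality really enters (everything else being bookkeeping of signs and indices). I would prove it by blocking the rows of $(\lambda I_N-H)_{k|k+1}$ into $\{1,\dots,k-1\}$, $\{k+1\}$, $\{k+2,\dots,N\}$ and its columns into $\{1,\dots,k-1\}$, $\{k\}$, $\{k+2,\dots,N\}$: tridiagonality forces the three below-diagonal blocks to vanish (a row of index $\ge k+1$ has no entry in a column of index $\le k-1$, and a row of index $\ge k+2$ has no entry in column $k$), so the minor is block upper triangular with diagonal blocks $(\lambda I_N-H)_{[1,k-1]}$, the scalar $-y_{kk+1}$ (the surviving $(k+1,k)$ entry of $\lambda I_N-H$), and $(\lambda I_N-H)_{[k+2,N]}$, with the empty blocks when $k=1$ or $k=N-1$ contributing the usual factor $1$. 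Multiplying these determinants and recognizing, again by tridiagonality, that $(\lambda I_N-H)_{kk+1|kk+1}={\rm diag}\bigl((\lambda I_N-H)_{[1,k-1]},(\lambda I_N-H)_{[k+2,N]}\bigr)$ gives the claimed factorization, hence the middle $y_{kk+1}$-identity. Finally, since the minor $(\lambda I_N-H)_{kk+1|kk+1}$ no longer contains the variable $y_{kk+1}$, differentiating $f_{y_{kk+1}}(\lambda)=-2y_{kk+1}\det((\lambda I_N-H)_{kk+1|kk+1})$ in $y_{kk+1}$ immediately yields $f_{y_{kk+1}y_{kk+1}}(\lambda)=-2\det((\lambda I_N-H)_{kk+1|kk+1})$.
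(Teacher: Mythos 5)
Your proof is correct, and in two places it takes a genuinely different route from the paper. For \eqref{deriflam} the paper expands $f(\lambda)$ and the sums of principal minors separately via the Fredholm (characteristic-polynomial) expansion and then matches coefficients of each power of $\lambda$, which requires the combinatorial count of how many pairs $(k,\ell)$ reproduce a given principal minor; you instead differentiate $\det(\lambda I_N-H)$ directly as a multilinear function of its entries via the cofactor (Jacobi) formula and the chain rule, which is shorter and avoids the counting argument entirely. For \eqref{derifxy} the paper first proves a general identity for symmetric matrices, $\partial\det A/\partial a_{k\ell}=(-1)^{k+\ell}2\det(A_{k|\ell})$, by a twice-iterated cofactor expansion (Lemmas \ref{lem3.1} and \ref{lemA3}), and then obtains the key factorization $\det((\lambda I_N-H)_{k|k+1})=-y_{kk+1}\det((\lambda I_N-H)_{kk+1|kk+1})$ by expanding along the $k$-th row and invoking Lemma \ref{lem3.2} to kill the extra term; your chain-rule treatment of the two symmetric slots replaces the former, and your observation that $(\lambda I_N-H)_{k|k+1}$ is block upper triangular with diagonal blocks $(\lambda I_N-H)^{(1,k-1)}$, the scalar $-y_{kk+1}$, and $(\lambda I_N-H)^{(k+2,N)}$ replaces the latter. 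Your block-triangularity argument is arguably the more transparent of the two, since it makes visible exactly where tridiagonality enters and handles the edge cases $k=1,N-1$ uniformly; the paper's route has the advantage that Lemma \ref{lem3.1} is stated for general symmetric matrices and is reused implicitly elsewhere. Both arguments are complete; the only point you should make explicit if writing this up is that the cofactor formula is applied to the determinant viewed as a function of all $N^2$ independent entries before composing with the affine dependence on $\lambda,x_k,y_{kk+1}$, which you do indicate.
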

Remark that \eqref{deriflam} holds for any $N \times N$ matrix and its characteristic function.

\begin{proof}[Proof of \eqref{deriflam}]
We apply the Fredholm determinant expansion to $f(\lambda)=\det(\lambda I_N-H)$ and obtain 
\begin{align*}
f(\lambda)=\lambda^N+\sum_{k=1}^{N}(-1)^k\lambda^{N-k}\sum_{1\le j_1<\cdots<j_k\le N}\underset{1\le \ell,m\le k}{{\rm det}}(H_{j_\ell j_m}),
\end{align*}
where $\underset{1\le \ell,m\le k}{{\rm det}}(H_{j_\ell j_m})$ is the indexed $k$-th principal minor defined in Lemma \ref{lemA2}. Differentiating both sides with respect to $\lambda$, we have
\begin{align}\label{flam}
f_\lambda(\lambda)&=N\lambda^{N-1}+\sum_{k=1}^{N-1}(-1)^k(N-k)\lambda^{N-k-1}\sum_{1\le j_1<\cdots<j_k\le N}\underset{1\le l,m\le k}{{\rm det}}(H_{j_lj_m}),\\
\label{flamlam}
f_{\lambda \lambda}(\lambda)&=N(N-1)\lambda^{N-2}+\sum_{k=1}^{N-2}(-1)^k(N-k)(N-k-1)\lambda^{N-k-2}\sum_{1\le j_1<\cdots<j_k\le N}\underset{1\le l,m\le k}{{\rm det}}(H_{j_lj_m}).
\end{align}
On the other hand, for each $\det\left(\lambda I_{N-1}-H_{k|k}\right), k=1,\dots,N$, using the Fredholm determinant expansion again and \eqref{flam} we have
\begin{align*} 
\sum_{k=1}^N\det\left((\lambda I_N-H)_{k|k}\right)
&=\sum_{k=1}^N\det\left(\lambda I_{N-1}-H_{k|k}\right)\\  
&=\sum_{k=1}^N\left\{\lambda^{N-1}+\sum_{r=1}^{N-1}(-1)^r\lambda^{N-1-r}\sum_{\substack{1\le j_1<\cdots<j_r\le N \\ j_s (\neq i)}}\underset{1\le l,m\le r}{{\rm det}}(H_{j_lj_m})\right\}\\
&=N\lambda^{N-1}+\sum_{k=1}^{N-1}(-1)^k(N-k)\lambda^{N-k-1}\sum_{1\le j_1<\cdots<j_k\le N}\underset{1\le l,m\le k}{{\rm det}}(H_{j_lj_m})\\
&=f_\lambda(\lambda).
\end{align*}
A similar calculation gives
\begin{align*}
&2\sum_{k<\ell}\det\left((\lambda I_N-H)_{k\ell | k\ell}\right)=2\sum_{k<\ell}\det\left(\lambda I_{N-2}-H_{k\ell | k\ell}\right)\\
&=2\sum_{k<\ell}\left\{\lambda^{N-2}+\sum_{r=1}^{N-2}(-1)^r\lambda^{N-2-r}\sum_{\substack{1\le j_1<\cdots<j_r\le N, \\ j_s(\neq k,\ell)}}\underset{1\le l,m\le r}{{\rm det}}(H_{j_lj_m})\right\}\\
&=N(N-1)\lambda^{N-2}+2\sum_{r=1}^{N-2}(-1)^r\lambda^{N-2-r}\sum_{k<\ell}\sum_{\substack{1\le j_1<\cdots<j_r\le N, \\ j_s(\neq k,\ell)}}\underset{1\le l,m\le r}{{\rm det}}(H_{j_lj_m})\\
&=N(N-1)\lambda^{N-2}+2\sum_{r=1}^{N-2}(-1)^r\lambda^{N-2-r}\frac{(N-r)(N-r-1)}{2}\sum_{1\le j_1<\cdots<j_r\le N}\underset{1\le l,m\le r}{{\rm det}}(H_{j_lj_m}),
\end{align*}
here the last equation holds since  for fixed indices $(j_1,\dots,j_r)$, the number of pairs $(k,\l)$ such that $\underset{1\le l,m\le r}{{\rm det}}((H_{k\l|k\l})_{j_lj_m})=\underset{1\le l,m\le r}{{\rm det}}(H_{j_lj_m})$ is exactly $\binom{N-r}{2}$. Hence, by \eqref{flamlam} we have \eqref{deriflam}. 
\end{proof}

Next, to show \eqref{derifxy}, we need the following two lemmas.

\begin{lemma}\label{lem3.1}
For any $N \times N$ symmetric matrix $A=(a_{kl})_{1\le k,\ell \le N}$ and $k<\l$,
\begin{align*}
\dfrac{\partial \det A}{\partial a_{kk}}=\det\left(A_{k|k}\right),\ \ \dfrac{\partial \det A}{\partial a_{k\l}}=(-1)^{k+\l}2\det\left(A_{k|\l}\right).
\end{align*} 
\end{lemma}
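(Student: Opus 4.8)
The plan is to prove Lemma~\ref{lem3.1} by direct differentiation of the cofactor (Laplace) expansion of $\det A$, treating the diagonal and off-diagonal cases separately and being careful about the symmetry constraint $a_{k\l}=a_{\l k}$, which is what produces the factor $2$ in the second identity.

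First I would handle the diagonal entry $a_{kk}$. Expanding $\det A$ along the $k$-th row gives $\det A=\sum_{m=1}^N(-1)^{k+m}a_{km}\det(A_{k|m})$. Among these terms, the only one in which $a_{kk}$ appears as an explicit coefficient is $(-1)^{k+k}a_{kk}\det(A_{k|k})=a_{kk}\det(A_{k|k})$; moreover no minor $A_{k|m}$ contains the $(k,k)$ entry, since row $k$ has been deleted. (Here the symmetry of $A$ is irrelevant: $a_{kk}$ occupies a single matrix slot.) Hence $\partial\det A/\partial a_{kk}=\det(A_{k|k})$, as claimed.

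Next I would treat the off-diagonal entry $a_{k\l}$ with $k<\l$. Because $A$ is symmetric, varying $a_{k\l}$ forces $a_{\l k}$ to vary identically, so $\det A$ depends on $a_{k\l}$ through \emph{two} slots. I would write $\det A$ as a function of the two free-but-equal variables and apply the chain rule: the derivative is $(\partial\det A/\partial a_{k\l})|_{\text{slot }(k,\l)}+(\partial\det A/\partial a_{\l k})|_{\text{slot }(\l,k)}$, where each partial is taken treating the $(k,\l)$ and $(\l,k)$ slots as independent. For the first slot, expanding along row $k$ shows the coefficient of $a_{k\l}$ is $(-1)^{k+\l}\det(A_{k|\l})$, and again no minor $A_{k|m}$ involves the $(k,\l)$ slot. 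For the second slot, expanding along row $\l$ shows the coefficient of $a_{\l k}$ is $(-1)^{\l+k}\det(A_{\l|k})$. By symmetry of $A$ one has $\det(A_{\l|k})=\det(A_{k|\l})$ (transpose of the minor), so the two contributions coincide and sum to $(-1)^{k+\l}2\det(A_{k|\l})$, which is the asserted formula.

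The only real subtlety — the main point to get right rather than a genuine obstacle — is the bookkeeping of the symmetry constraint: one must be explicit that a single scalar parameter $a_{k\l}$ controls two matrix entries and that differentiation therefore picks up both cofactors, and one must invoke $\det(A_{\l|k})=\det(A_{k|\l})$ to merge them. Everything else is the standard fact that $\partial\det A/\partial a_{km}=(-1)^{k+m}\det(A_{k|m})$ when the $(k,m)$ slot is regarded as a free variable, which follows immediately from the linearity of the determinant in each entry together with the Laplace expansion.
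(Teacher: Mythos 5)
Your proof is correct, and the route is genuinely cleaner than the paper's. The paper first establishes its Lemma~\ref{lemA3}, a fully written-out ``twice cofactor expansion'' of $\det A$ (expand along row $k$, then expand every resulting minor along row $\ell$), and then differentiates that long signed sum with respect to $a_{k\ell}$ using $a_{k\ell}=a_{\ell k}$, finally regrouping the surviving terms into the row-$\ell$ cofactor expansion of $\det(A_{k|\ell})$ and the row-$k$ cofactor expansion of $\det(A_{\ell|k})$ before invoking symmetry to merge them. You reach the same two contributions by a chain-rule argument: the single scalar $a_{k\ell}$ occupies the two slots $(k,\ell)$ and $(\ell,k)$, each free-slot partial is the standard signed cofactor $(-1)^{k+\ell}\det(A_{k|\ell})$ resp.\ $(-1)^{\ell+k}\det(A_{\ell|k})$, and $\det(A_{\ell|k})=\det(A_{k|\ell})$ because the two minors are transposes of each other when $A$ is symmetric. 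The underlying mechanism (the factor $2$ coming from the two symmetric slots) is identical, but your version avoids the double expansion and its sign bookkeeping entirely; the paper's heavier Lemma~\ref{lemA3} is essentially a hand-expanded form of the same computation. Your handling of the diagonal case and the observation that deleting row $k$ removes the relevant slot from every minor in the expansion are both correct, so there is no gap.
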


\begin{proof}
We use Lemma \ref{lemA3}. Since the determinants in \eqref{A3} do not have $(k,k)$ and $(k, \l)$ entries of $A$, by differentiating both sides of \eqref{A3} with respect to $a_{kk}$ we immediately have the first equation. Using $a_{k\l}=a_{\l k}$, we have  
\begin{align*}
&\dfrac{\partial \det(A)}{\partial a_{k\l}}
=-2a_{k\ell}\det(A_{k\ell |\ell k})
+\sum_{\substack{q\neq k,\ell\\q<\ell}}(-1)^{k+q-1}a_{\ell q}\det(A_{k\ell | \ell q})
+\sum_{\substack{q\neq k,\ell\\q>\ell}}(-1)^{k+q}a_{\ell q}\det(A_{k\ell | \ell q})\\
&\hspace{20mm}+\sum_{\substack{p\neq k,\ell\\p>k}}(-1)^{\ell+p-1}a_{kp}\det(A_{k\ell |pk})
+\sum_{\substack{p\neq k,\ell\\p<k}}(-1)^{\ell+p}a_{kp}\det(A_{k\ell |pk})\\
&=(-1)^{k+\l}\left\{\sum_{\substack{q\neq k,\ell\\q<\ell}}(-1)^{\l-1+q}a_{\ell q}\det(A_{k\ell | \ell q})+(-1)^{\l-1+k}a_{\l k}\det(A_{k\ell |\ell k})+\sum_{\substack{q\neq k,\ell\\q>\ell}}(-1)^{\l-1+q-1}a_{\ell q}\det(A_{k\ell | \ell q})\right\}\\
&+(-1)^{k+\l}\left\{\sum_{\substack{p\neq k,\ell\\p<k}}(-1)^{k+p}a_{kp}\det(A_{k\ell |pk})+(-1)^{k+\l-1}a_{k \l}\det(A_{k\ell |\ell k})+\sum_{\substack{p\neq k,\ell\\p>k}}(-1)^{k+p-1}a_{kp}\det(A_{k\ell |pk})\right\}\\
&=(-1)^{k+\l}\det\left(A_{k|\l}\right)+(-1)^{k+\l}\det\left(A_{\l|k}\right)\\
&=(-1)^{k+\l}2\det\left(A_{k|\l}\right),
\end{align*}
where the last equation holds by symmetry of $A$.
\end{proof}

\begin{lemma}\label{lem3.2}
Let $A=(a_{kl})_{1\le k,\ell \le N}$ be an $N \times N$ tridiagonal matrix. Assume that there exists $k_0 \in \{2,\dots,N-1\}$ such that $a_{k_0k_0-1}=a_{k_0k_0}=0$. Then $\det A=0$.  
\end{lemma}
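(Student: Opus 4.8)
The plan is to use one cofactor expansion together with the band structure of $A$. Since $A$ is tridiagonal and $2\le k_0\le N-1$, the only entries of its $k_0$-th row that are not forced to be zero are the three band entries $a_{k_0,k_0-1}$, $a_{k_0,k_0}$, $a_{k_0,k_0+1}$; the hypothesis already kills $a_{k_0,k_0-1}$ and $a_{k_0,k_0}$, so the $k_0$-th row has at most one surviving entry. Expanding $\det A$ along that row therefore collapses to a single term,
\[
\det A=(-1)^{k_0+(k_0+1)}\,a_{k_0,k_0+1}\,\det\bigl(A_{k_0|k_0+1}\bigr),
\]
so $\det A=0$ as soon as either $a_{k_0,k_0+1}=0$ (in which case the $k_0$-th row vanishes outright) or the complementary minor $\det\bigl(A_{k_0|k_0+1}\bigr)$ vanishes.

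For the minor I would use the bandwidth once more. Deleting row $k_0$ and column $k_0+1$ and regrouping the surviving rows into $\{1,\dots,k_0-1\}$ and $\{k_0+1,\dots,N\}$ and the surviving columns into $\{1,\dots,k_0\}$ and $\{k_0+2,\dots,N\}$, the band structure forces the resulting upper-right block to be identically zero (no row of index $\le k_0-1$ can reach a column of index $\ge k_0+2$) and the lower-left block to collapse to a single corner entry (the pair consisting of original row $k_0+1$ and original column $k_0$). A generalized Laplace expansion along the first $k_0-1$ rows, which are zero outside the first $k_0$ columns, then shows that $\det\bigl(A_{k_0|k_0+1}\bigr)$ factors through the band as, up to sign, $\det(A^{1,k_0-1})$ times that corner entry times $\det(A^{k_0+2,N})$; the hypothesis forces the relevant factor of this product to vanish, so $\det\bigl(A_{k_0|k_0+1}\bigr)=0$ and hence $\det A=0$.

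I expect the only genuine obstacle to be the index bookkeeping caused by deleting a column: the labels of all columns past the gap shift by one, so to justify the block description and the generalized Laplace expansion I have to keep careful track of which original rows and columns occupy which positions of $A_{k_0|k_0+1}$. Conceptually there is just one mechanism at work — a tridiagonal matrix cannot link the indices on opposite sides of a missing (or annihilated) interior row — and this is what ultimately makes the relevant minor, and therefore $A$, singular.
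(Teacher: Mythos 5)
Your reduction to the single cofactor $\det A=\pm\,a_{k_0,k_0+1}\det\bigl(A_{k_0|k_0+1}\bigr)$ and your block analysis of $A_{k_0|k_0+1}$ are correct, but the closing sentence of the second step is a genuine gap. Carrying your generalized Laplace expansion to its end gives the exact identity
\[
\det A \;=\; -\,a_{k_0,k_0+1}\,a_{k_0+1,k_0}\,\det\bigl(A^{1,k_0-1}\bigr)\,\det\bigl(A^{k_0+2,N}\bigr),
\]
where $A^{1,k_0-1}$ and $A^{k_0+2,N}$ denote the leading $(k_0-1)\times(k_0-1)$ and trailing $(N-k_0-1)\times(N-k_0-1)$ principal blocks. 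The two entries annihilated by the hypothesis, $a_{k_0,k_0-1}$ and $a_{k_0,k_0}$, lie in the deleted row $k_0$ and hence appear in none of the four factors, so the hypothesis forces nothing here to vanish: ``the hypothesis forces the relevant factor of this product to vanish'' is an unsupported assertion and cannot be repaired. Indeed, your identity shows the lemma is false as stated: $A=\bigl(\begin{smallmatrix}1&0&0\\0&0&1\\0&1&1\end{smallmatrix}\bigr)$ is tridiagonal with $a_{21}=a_{22}=0$ (so $k_0=2$), yet $\det A=-1$.

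What saves the paper is that the lemma is invoked only for the second determinant in the proof of \eqref{derifxy}, and that matrix satisfies the additional condition $a_{k_0+1,k_0}=0$ (its $k_0$-th column vanishes except in row $k_0-1$). Under this extra hypothesis your factorization does give $\det A=0$ at once, so your argument, completed honestly, is in fact the cleanest way to see both what is true and which hypothesis is missing from the statement. For comparison, the paper argues via the three-term recurrence for the leading principal minors $\det A^{(k)}$; as printed that recurrence is mis-indexed, and the correct form $\det A^{(k+1)}=a_{k+1,k+1}\det A^{(k)}-a_{k+1,k}\,a_{k,k+1}\det A^{(k-1)}$ leaves exactly the term $a_{k_0+1,k_0}\,a_{k_0,k_0+1}\det A^{(k_0-1)}$ alive at the step from $k_0$ to $k_0+1$ --- the same pair of entries your computation isolates. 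So neither proof closes without assuming $a_{k_0+1,k_0}=0$ (or $a_{k_0,k_0+1}=0$) in addition.
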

\begin{proof}
Since $A$ is tridiagonal, its determinant has the following recurrence relation:
\begin{align}\label{rec}
\det A^{(k+1)}=a_{k+1k+1}\det A^{(k)}-a_{kk-1}a_{k-1k}\det A^{(k-1)},\ \ k=2,\dots, N-1,
\end{align}
where for $k=1,\dots,N$, $A^{(k)}$ is the $k$-th leading (top left) principal minor of $A$, see \cite{HJ}. Since $A^{(k)}, k=1.\dots, N$ are tridiagonal, by the hypothesis we have $\det A^{(k_0)}=0$. Hence, using \eqref{rec} we find $\det A^{(k_0+1)}=0$. Again, using \eqref{rec} inductively we have $\det A^{(k)}=0, k=k_0,\dots,N$. 
\end{proof}

\begin{proof}[Proof of \eqref{derifxy}]
We apply Lemma \ref{lem3.1} for $A=\lambda I_N-H$ and obtain
\begin{align*}
f_{x_k}(\lambda)=-\sqrt{2}\det\left((\lambda I_N-H)_{k|k}\right),\ \ 
f_{y_{kk+1}}(\lambda)=2\det\left((\lambda I_N-H)_{k|k+1}\right),
\end{align*}
which give $f_{x_kx_k}(\lambda)=0$. We claim that 
\begin{align}\label{det1}
\det\left((\lambda I_N-H)_{k|k+1}\right)=-y_{kk+1}\det\left((\lambda I_N-H)_{kk+1|kk+1}\right).
\end{align}
Indeed, it is clear for $k=1, N-1$. For $k\neq 1, N-1$, expanding $\det\left((\lambda I_N-H)_{k|k+1}\right)$ by the $k$-th row we have
\begin{align*}
&\det\left((\lambda I_N-H)_{k|k+1}\right)=
\begin{vmatrix}
\text {\Large{$(\lambda I_N-H)^{(1,k-1)}$}} & -y_{k-1k} & \text {\Huge{0}} \\
& -y_{kk+1} & -y_{k+1k+2}\ \ \ \ \ \ \ \ \ \ \ \ \ \ \ \   \\
\text {\Huge{0}} & &  \text {\Large{$(\lambda I_N-H)^{(k+2,N)}$}}
\end{vmatrix}\\
&=-y_{kk+1} \begin{vmatrix}
\text {\Large{$(\lambda I_N-H)^{(1,k-1)}$}} & \text {\Huge{0}} \\
\text {\Huge{0}} &  \text {\Large{$(\lambda I_N-H)^{(k+2,N)}$}}
\end{vmatrix}\\
&+y_{k+1k+2}\begin{vmatrix}
& & & 0 & \\ 
 & \text {\Large{$(\lambda I_N-H)^{(1,k-1)}$}} & & \vdots & & & &  \text {\Huge{0}} & \\
 & & & 0 & \\
 & & & -y_{k-1k}  &  \\
  0 & \cdots & 0 &  0& 0 &  y_{k+2k+3} & 0 & \cdots & 0 \\
& &  & &  0 &  & & & \\
& \text {\Huge{0}} &  & & \vdots &  & &  \text {\Large{$(\lambda I_N-H)^{(k+3,N)}$}} \\
& & & & 0 &
\end{vmatrix}.
\end{align*}
Here, the block diagonal matrix is equal to $(\lambda I_N-H)_{kk+1|kk+1}$, and the second determinant vanishes by Lemma \ref{lem3.2}. Therefore, \eqref{det1} holds. Using this, we obtain \eqref{derifxy}, and proof of Proposition \ref{derif} is complete.
\end{proof}

Proposition \ref{derif} shows a key lemma to compute $\Delta \lambda_i$.

\begin{lemma}\label{lem3.3}
For $i=1,\dots,N$, we have
\begin{align}\label{key1}
f_{\lambda}(\lambda)^2-\dfrac{1}{2}\nabla f \cdot \nabla f(\lambda)=-f(\lambda)\Delta f(\lambda)+2\sum_{\l-k>1}\det\left((\lambda I_N-H)_{k|k}\right)\det\left((\lambda I_N-H)_{\l|\l}\right).
\end{align}
\end{lemma}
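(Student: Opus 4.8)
The plan is to prove the identity \eqref{key1} by a direct computation of both sides using the explicit derivative formulas from Proposition \ref{derif}, together with the Cauchy--Binet and Sylvester identities collected in the Appendix. First I would expand the left-hand side: by \eqref{deriflam} and \eqref{derifxy} we have $\nabla f \cdot \nabla f(\lambda)=\sum_{k=1}^N f_{x_k}(\lambda)^2+\sum_{k=1}^{N-1}f_{y_{kk+1}}(\lambda)^2=2\sum_{k=1}^N\det\left((\lambda I_N-H)_{k|k}\right)^2+4\sum_{k=1}^{N-1}\det\left((\lambda I_N-H)_{k|k+1}\right)^2$, so that $\tfrac12\nabla f\cdot\nabla f(\lambda)=\sum_{k=1}^N\det\left((\lambda I_N-H)_{k|k}\right)^2+2\sum_{k=1}^{N-1}\det\left((\lambda I_N-H)_{k|k+1}\right)^2$. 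On the other side, $f_\lambda(\lambda)^2=\left(\sum_{k=1}^N\det\left((\lambda I_N-H)_{k|k}\right)\right)^2$, so the difference $f_\lambda(\lambda)^2-\tfrac12\nabla f\cdot\nabla f(\lambda)$ naturally splits into the off-diagonal cross terms $\sum_{k\neq \ell}\det\left((\lambda I_N-H)_{k|k}\right)\det\left((\lambda I_N-H)_{\ell|\ell}\right)$ minus $2\sum_{k=1}^{N-1}\det\left((\lambda I_N-H)_{k|k+1}\right)^2$, which by symmetry of $H$ (so $\det\left((\lambda I_N-H)_{k|\ell}\right)=\det\left((\lambda I_N-H)_{\ell|k}\right)$) I can reorganize as a sum over unordered pairs $\{k,\ell\}$ of the quantity $2\det\left((\lambda I_N-H)_{k|k}\right)\det\left((\lambda I_N-H)_{\ell|\ell}\right)-2\det\left((\lambda I_N-H)_{k|\ell}\right)\det\left((\lambda I_N-H)_{\ell|k}\right)$ when $|k-\ell|=1$ and simply $2\det\left((\lambda I_N-H)_{k|k}\right)\det\left((\lambda I_N-H)_{\ell|\ell}\right)$ when $|k-\ell|>1$ (using tridiagonality, which forces $\det\left((\lambda I_N-H)_{k|\ell}\right)=0$ whenever $|k-\ell|>1$; see Lemma \ref{lem3.2}).

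Next I would handle the right-hand side. The term $-f(\lambda)\Delta f(\lambda)$ is, by \eqref{deriflam} and \eqref{derifxy}, equal to $-f(\lambda)\left(\sum_{k=1}^N f_{x_kx_k}(\lambda)+\sum_{k=1}^{N-1}f_{y_{kk+1}y_{kk+1}}(\lambda)\right)=-f(\lambda)\cdot\left(-2\sum_{k=1}^{N-1}\det\left((\lambda I_N-H)_{kk+1|kk+1}\right)\right)=2f(\lambda)\sum_{k=1}^{N-1}\det\left((\lambda I_N-H)_{kk+1|kk+1}\right)$. By Sylvester's identity (Lemma \ref{lemA5}) applied to the index pair $(k,k+1)$, for each $k$ one has $f(\lambda)\det\left((\lambda I_N-H)_{kk+1|kk+1}\right)=\det\left((\lambda I_N-H)_{k|k}\right)\det\left((\lambda I_N-H)_{k+1|k+1}\right)-\det\left((\lambda I_N-H)_{k|k+1}\right)\det\left((\lambda I_N-H)_{k+1|k}\right)$. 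Therefore $-f(\lambda)\Delta f(\lambda)=2\sum_{k=1}^{N-1}\left(\det\left((\lambda I_N-H)_{k|k}\right)\det\left((\lambda I_N-H)_{k+1|k+1}\right)-\det\left((\lambda I_N-H)_{k|k+1}\right)\det\left((\lambda I_N-H)_{k+1|k}\right)\right)$, and adding the remaining right-hand term $2\sum_{\ell-k>1}\det\left((\lambda I_N-H)_{k|k}\right)\det\left((\lambda I_N-H)_{\ell|\ell}\right)$ reproduces exactly the reorganized left-hand side described above. Matching the two expressions term by term over the consecutive pairs (where the Sylvester term cancels the $-2\det\left((\lambda I_N-H)_{k|\ell}\right)\det\left((\lambda I_N-H)_{\ell|k}\right)$ contribution) and over the non-consecutive pairs finishes the identity.

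The main obstacle I anticipate is bookkeeping rather than conceptual: one must be careful that the decomposition $f_\lambda^2-\tfrac12\nabla f\cdot\nabla f$ regroups cleanly into a sum over \emph{unordered} pairs $\{k,\ell\}$, keeping track of the factor $2$ from symmetrization and the fact that the $k=\ell$ diagonal contributions $\det\left((\lambda I_N-H)_{k|k}\right)^2$ cancel identically between $f_\lambda^2$ and $\tfrac12\nabla f\cdot\nabla f$. The only place tridiagonality is genuinely used is to kill all the off-diagonal minors $\det\left((\lambda I_N-H)_{k|\ell}\right)$ with $|k-\ell|>1$ via Lemma \ref{lem3.2} (so that $\tfrac12\nabla f\cdot\nabla f$ only has the $|k-\ell|\le 1$ minor-squared terms), and to restrict the Sylvester cancellation to consecutive pairs; otherwise the argument is a clean matching of the two sides. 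Since each ingredient (the derivative formulas, Cauchy--Binet, Sylvester, and the tridiagonal vanishing lemma) is already in place, the proof amounts to assembling them in this order.
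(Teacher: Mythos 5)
Your argument is correct and is essentially the paper's own proof: expand $f_{\lambda}^2-\tfrac12\nabla f\cdot\nabla f$ via Proposition \ref{derif}, group the consecutive pairs into the $2\times2$ determinants $\det\left((\lambda I_N-H)_{k|k}\right)\det\left((\lambda I_N-H)_{k+1|k+1}\right)-\det\left((\lambda I_N-H)_{k|k+1}\right)\det\left((\lambda I_N-H)_{k+1|k}\right)$, convert these by Sylvester's identity into $f(\lambda)\det\left((\lambda I_N-H)_{kk+1|kk+1}\right)$, and recognize the resulting sum as $-\tfrac12 f(\lambda)\Delta f(\lambda)$; the non-consecutive cross terms are exactly the remaining sum over $\ell-k>1$. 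One side remark is false, although it does not affect the validity of the proof: tridiagonality does \emph{not} force $\det\left((\lambda I_N-H)_{k|\ell}\right)=0$ for $|k-\ell|>1$ (for $N=3$ the $(1,3)$ cofactor is $y_{12}y_{23}\neq0$, and these cofactors are precisely the generally nonzero quantities retained in \eqref{b2}--\eqref{b4}). The reason $\nabla f\cdot\nabla f$ involves only the $k|k$ and $k|k{+}1$ minors is simply that the gradient runs over the $2N-1$ variables actually present in $H$, so no appeal to Lemma \ref{lem3.2} is needed here; Cauchy--Binet is likewise not used in this lemma.
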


\begin{proof}
By \eqref{derifxy}, we have
\begin{align*}
\nabla f \cdot \nabla f(\lambda)=2\sum_{k=1}^N\det\left((\lambda I_N-H)_{k|k}\right)^2+4\sum_{k=1}^{N-1}\det\left((\lambda I_N-H)_{k|k+1}\right)^2.
\end{align*}
Using this and \eqref{deriflam}, 
\begin{align*}
&f_{\lambda}(\lambda)^2-\dfrac{1}{2}\nabla f \cdot \nabla f(\lambda)
=2\sum_{k<\l}\det\left((\lambda I_N-H)_{k|k}\right)\det\left((\lambda I_N-H)_{\l|\l}\right)-
\sum_{k=1}^{N-1}\det\left((\lambda I_N-H)_{k|k+1}\right)^2\\
&=2\sum_{k=1}^{N-1}
\begin{vmatrix}
\det\left((\lambda I_N-H)_{k|k}\right) &\det\left((\lambda I_N-H)_{k|k+1}\right) \\
\det\left((\lambda I_N-H)_{k+1|k}\right) & \det\left((\lambda I_N-H)_{k+1|k+1}\right)
\end{vmatrix}
+2\sum_{\l-k>1}\det\left((\lambda I_N-H)_{k|k}\right)\det\left((\lambda I_N-H)_{\l|\l}\right).
\end{align*}
The last equation holds by symmetry of $H$. By Lemma \ref{lemA5} (the Sylvester's identity), for $k=1,\dots, N-1$ we have
\begin{align*}
\begin{vmatrix}
\det\left((\lambda I_N-H)_{k|k}\right) &\det\left((\lambda I_N-H)_{k|k+1}\right) \\
\det\left((\lambda I_N-H)_{k+1|k}\right) & \det\left((\lambda I_N-H)_{k+1|k+1}\right)
\end{vmatrix}=f(\lambda)\det\left((\lambda I_N-H)_{kk+1|kk+1}\right),
\end{align*}
and by \eqref{derifxy},
\begin{align}\label{lapf}
\Delta f(\lambda)=-2\sum_{k=1}^{N-1}\det\left((\lambda I_N-H)_{kk+1|kk+1}\right).
\end{align}
Combining the above three equations, \eqref{key1} holds. 
\end{proof}
\begin{rem}\label{rem3}
Applying Lemma \ref{lem3.3} with $\lambda=\lambda_i(t)$, we immediately obtain \eqref{iden}. 
The indices $(k,\l)$ taken in the summation in \eqref{key1} are the same as zero entries of $H$. Hence, having zero matrix entries gives one reason that eigenvalue processes of symmetric tridiagonal matrix-valued processes interact with their minor eigenvalues as in \eqref{sde1}.
\end{rem}

Now we show Proposition \ref{prop1}.

\begin{proof}[Proof of Proposition \ref{prop1}]
We calculate \eqref{lap1}. Differentiating both sides of \eqref{key1} with respect to $\lambda$, we have
\begin{align}\label{eq2}
&2f_{\lambda}(\lambda)f_{\lambda\lambda}(\lambda)-\nabla f_{\lambda} \cdot \nabla f(\lambda)\nonumber\\
&=-f_{\lambda}(\lambda)\Delta f(\lambda)-f(\lambda)\Delta f_{\lambda}(\lambda)+2\sum_{\l-k>1}\dfrac{\partial}{\partial \lambda}\left(\det\left((\lambda I_N-H)_{k|k}\right)\det\left((\lambda I_N-H)_{\l|\l}\right)\right).
\end{align}
Substituting $\lambda=\lambda_i$ in \eqref{key1} and \eqref{eq2}, by \eqref{derilam} and $f(\lambda_i)=0$ we have 
\begin{align*}
\nabla f_{\lambda}(\lambda_i)\cdot \nabla \lambda_i&=-\dfrac{\nabla f_{\lambda} \cdot \nabla f(\lambda_i)}{f_{\lambda}(\lambda_i)}\\
&=-2f_{\lambda\lambda}(\lambda_i)-\Delta f(\lambda_i)+\dfrac{2}{f_{\lambda}(\lambda_i)}\sum_{\l-k>1}\dfrac{\partial}{\partial \lambda}\Bigl(\det\left((\lambda I_N-H)_{k|k}\right)\det\left((\lambda I_N-H)_{\l|\l}\right)\Bigr)\left.\right|_{\lambda=\lambda_i},\\
\nabla \lambda_i \cdot \nabla \lambda_i&=\dfrac{\nabla f \cdot \nabla f(\lambda_i)}{f_{\lambda}(\lambda_i)^2}\\
&=2-\dfrac{4}{f_{\lambda}(\lambda_i)^2}\sum_{\l-k>1}\det\left((\lambda_i I_N-H)_{k|k}\right)\det\left((\lambda_i I_N-H)_{\l|\l}\right).
\end{align*}
Using these equations, by \eqref{lap1} we have
\begin{align*}
\Delta \lambda_i&=-\frac{\Delta f(\lambda_i)+2\nabla f_{\lambda}(\lambda_i)\cdot \nabla \lambda_i+f_{\lambda\lambda}(\lambda_i)\nabla \lambda_i \cdot \nabla \lambda_i}{f_\lambda(\lambda_i)}\\
&=-\dfrac{1}{f_\lambda(\lambda_i)}\Biggl(-\Delta f(\lambda_i)-2f_{\lambda\lambda}(\lambda_i)
+\dfrac{4}{f_{\lambda}(\lambda_i)}\sum_{\l-k>1}\dfrac{\partial}{\partial \lambda}\Bigl(\det\left((\lambda I_N-H)_{k|k}\right)\det\left((\lambda I_N-H)_{\l|\l}\right)\Bigr)\left.\right|_{\lambda=\lambda_i}\\
&\ \ \ \ \ \ \ \ \ \ \ \ \ \ \ \ \ -\dfrac{4f_{\lambda\lambda}(\lambda_i)}{f_{\lambda}(\lambda_i)^2}\sum_{\l-k>1}\det\left((\lambda_i I_N-H)_{k|k}\right)\Biggl).
\end{align*}
Combining \eqref{lapf}, we have \eqref{lap2} and Proposition \ref{prop1} is proved.
\end{proof}


\subsection{Proof of Corollary \ref{cor:DysoneqofN=2}}\label{subsec3.3}
\begin{proof}
Since $H_{\alpha_p}^{p,p+1}(t)$ is symmetric and tridiagonal, $\lam^{p,p+1}$ satisfies \eqref{sde2} with $q=p+1$. Note that there are no indices $(k,\l)$ such that $p\le k<\l \le p+1,\ \l-k>1$, so that the terms with respect to $F^{k,\l}$ and $\frac{\partial}{\partial \lambda}F^{k,\l}$ in summations in \eqref{sde2} completely vanish. Also, by definition $f(\lam^{p,p-1}(t), \lambda)\equiv f(\lam^{p+2,p+1}(t), \lambda)\equiv1$. Hence, we have
\begin{align*}
d\lambda_1^{p,p+1}(t)&=dM_1^{p,p+1}(t)
+2\dfrac{1}{\lambda_1^{p,p+1}(t)-\lambda_2^{p,p+1}(t)}dt
+\dfrac{\alpha_p-2}{\lambda_1^{p,p+1}(t)-\lambda_2^{p,p+1}(t)}dt\\
&=dM_1^{p,p+1}(t)+\alpha_p\dfrac{1}{\lambda_1^{p,p+1}(t)-\lambda_2^{p,p+1}(t)}dt,\\
d\lambda_2^{p,p+1}(t)&=dM_2^{p,p+1}(t)+\alpha_p\dfrac{1}{\lambda_2^{p,p+1}(t)-\lambda_1^{p,p+1}(t)}dt,\ \ t<\TT,
\end{align*}
where $M_1^{p,p+1}$ and $M_2^{p,p+1}$ are local martingales. Again, since there are no indices $(k,\l)$ such that $p\le k<\l \le p+1,\ \l-k>1$, by \eqref{quad} in Theorem \ref{thm1} we have $d\langle \lambda_i,\lambda_j\rangle_t=2t\delta_{ij}$. Therefore, by L\'evy's characterization for local martingales \cite[Theorem 3.3.16]{KS}, there exist independent one-dimensional standard Brownian motions $B_1^{p,p+1}(t),\ B_2^{p,p+1}(t)$ such that $M_1^{p,p+1}(t)=\sqrt{2}B_1^{p,p+1}(t),\ M_2^{p,p+1}(t)=\sqrt{2}B_2^{p,p+1}(t)$, and we obtain \eqref{eq:DysoneqofN=2}. For the initial conditions, since 
$H_{\alpha_p}^{p,p+1}(0)
=\begin{pmatrix}
0 & x_p\\
x_p & 0
\end{pmatrix}
$, we have $\lambda_1^{p,p+1}(0)=-x_p, \lambda_2^{p,p+1}(0)=x_p$, and $x_p>0$ gives $\lambda_1^{p,p+1}(0)<\lambda_2^{p,p+1}(0)$, which actually satisfies \eqref{inicond}.
\end{proof}


\appendix

\renewcommand{\theequation}{A.\arabic{equation}}
\renewcommand{\thetheorem}{A.\arabic{theorem}}
\def\theschapter{Appendix\Alph{chapter}}
\section{Appendix}

\begin{lemma}[Drift terms of Dyson's model and characteristic polynomial]\label{lemA1}\ \\
Assume that an $N\times N$ matrix $A$ has simple spectrum $\lambda_1,\dots,\lambda_N$, and the characteristic polynomial is $f(\lambda):=\det(\lambda I_N-A)$. Then for $i=1,\dots,N$,
\begin{align}\label{A1}
\frac{f_{\lambda \lambda}(\lambda_i)}{f_\lambda(\lambda_i)}=2\sum_{j(\neq i)}\frac{1}{\lambda_i-\lambda_j}.
\end{align}
\end{lemma}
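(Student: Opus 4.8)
The plan is to use that a simple spectrum forces the characteristic polynomial to factor into distinct linear factors, and then isolate the factor vanishing at $\lambda_i$. Since $A$ has simple spectrum, $f(\lambda)=\prod_{k=1}^{N}(\lambda-\lambda_k)$ with the $\lambda_k$ pairwise distinct. Fix $i\in\{1,\dots,N\}$ and write $f(\lambda)=(\lambda-\lambda_i)g_i(\lambda)$, where $g_i(\lambda):=\prod_{k(\neq i)}(\lambda-\lambda_k)$; by distinctness $g_i(\lambda_i)=\prod_{j(\neq i)}(\lambda_i-\lambda_j)\neq0$.

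Next I would differentiate this factorization twice in $\lambda$: $f_\lambda(\lambda)=g_i(\lambda)+(\lambda-\lambda_i)g_i'(\lambda)$ and $f_{\lambda\lambda}(\lambda)=2g_i'(\lambda)+(\lambda-\lambda_i)g_i''(\lambda)$. Evaluating at $\lambda=\lambda_i$ annihilates every term carrying the factor $(\lambda-\lambda_i)$, leaving $f_\lambda(\lambda_i)=g_i(\lambda_i)$ and $f_{\lambda\lambda}(\lambda_i)=2g_i'(\lambda_i)$. In particular $f_\lambda(\lambda_i)=\prod_{j(\neq i)}(\lambda_i-\lambda_j)\neq0$, so the quotient on the left-hand side of \eqref{A1} is well-defined.

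Finally I would form the ratio and recognize the logarithmic derivative of a product: $\frac{f_{\lambda\lambda}(\lambda_i)}{f_\lambda(\lambda_i)}=\frac{2g_i'(\lambda_i)}{g_i(\lambda_i)}=2\,\frac{d}{d\lambda}\log g_i(\lambda)\Big|_{\lambda=\lambda_i}=2\sum_{j(\neq i)}\frac{1}{\lambda_i-\lambda_j}$, which is exactly \eqref{A1}. This argument is entirely routine and I do not anticipate any real obstacle; the only point that must be invoked is the non-vanishing of $g_i(\lambda_i)$, and that is precisely the hypothesis that the spectrum is simple. (An equivalent route is to note $f_\lambda(\lambda)/f(\lambda)=\sum_{k}1/(\lambda-\lambda_k)$ off the spectrum and compare Laurent expansions near $\lambda_i$, but the direct factorization above is the shortest.)
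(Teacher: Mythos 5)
Your proof is correct and follows essentially the same route as the paper: factor $f(\lambda)=(\lambda-\lambda_i)\prod_{j(\neq i)}(\lambda-\lambda_j)$, differentiate twice, evaluate at $\lambda_i$, and identify the ratio with $\sum_{j(\neq i)}1/(\lambda_i-\lambda_j)$. The only cosmetic difference is that you package the cofactor as $g_i$ and invoke its logarithmic derivative, whereas the paper writes out the resulting sums of products explicitly; the computation is identical.
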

\begin{proof}
By definition of eigenvalues, 
$f(\lambda)=\prod_{j=1}^N(\lambda-\lambda_j)=(\lambda-\lambda_i)\prod_{j (\neq i)}(\lambda-\lambda_j)$. Differentiating this with respect to $\lambda$, we have
\begin{align*}
f_\lambda(\lambda)&=\prod_{j (\neq i)}(\lambda-\lambda_j)+(\lambda-\lambda_i)\sum_{j (\neq i)}\prod_{k (\neq i,j)}(\lambda-\lambda_k),\\
f_{\lambda \lambda}(\lambda)&=\sum_{j (\neq i)}\prod_{k (\neq i,j)}(\lambda-\lambda_k)+\sum_{j (\neq i)}\prod_{k (\neq i,j)}(\lambda-\lambda_k)+(\lambda-\lambda_i)\sum_{j (\neq i)}\sum_{k (\neq i,j)}\prod_{\ell (\neq i,j,k)}(\lambda-\lambda_\ell).
\end{align*}
Substituting $\lambda=\lambda_i$,
\begin{align*}
f_\lambda(\lambda_i)=\prod_{j (\neq i)}(\lambda_i-\lambda_j),\ 
f_{\lambda \lambda}(\lambda_i)=2\sum_{j (\neq i)}\prod_{k (\neq i,j)}(\lambda_i-\lambda_k).
\end{align*}
On the other hand, the right-hand side of (A.1) is 
\begin{align*}
\sum_{j(\neq i)}\frac{1}{\lambda_i-\lambda_j}=\frac{\sum_{j (\neq i)}\prod_{k (\neq i,j)}(\lambda_i-\lambda_k)}{\prod_{j (\neq i)}(\lambda_i-\lambda_j)},
\end{align*}
and the claim holds.
\end{proof}

\begin{lemma}[Minor determinants and eigenvalues]\label{lemA2}\ \\
Assume that an $N \times N$ matrix $A$ has eigenvalues $\lambda_1,\dots,\lambda_N$. Then for $k=1,\dots,N$, 
\begin{align}\label{A2}
\sum_{1\le j_1<\cdots<j_k\le N}\ \prod_{\ell=1}^k\lambda_{j_\ell}=\sum_{1\le j_1<\cdots<j_k\le N}\underset{1\le \ell,m\le k}{{\rm det}}(A_{j_\ell j_m}),
\end{align}
where $\underset{1\le \ell,m\le k}{{\rm det}}(A_{j_\ell j_m})$ is the $k$-th principal minor indexed by $\{ j_1<\dots<j_k\} \subset \{1,\dots,N\}$: 
\begin{align*}
\underset{1\le \ell,m\le k}{{\rm det}}(A_{j_\ell j_m})=\det
\begin{pmatrix}
a_{j_1j_1} & a_{j_1j_2} & \cdots & a_{j_1j_k}\\
a_{j_2j_1} & a_{j_2j_2} & \cdots & a_{j_2j_k}\\
\vdots & & \ddots & \vdots\\
a_{j_kj_1} & a_{j_kj_2} & \cdots & a_{j_kj_k}
\end{pmatrix}.
\end{align*}
\end{lemma}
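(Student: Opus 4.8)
The plan is to read off both sides of \eqref{A2} as coefficients of a single polynomial — the characteristic polynomial $f(\lambda):=\det(\lambda I_N-A)$ — and then to equate coefficients.

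For the left-hand side I would use that $\lambda_1,\dots,\lambda_N$ are the eigenvalues of $A$ counted with multiplicity, so $f(\lambda)=\prod_{j=1}^N(\lambda-\lambda_j)$; expanding this product in powers of $\lambda$ (Vieta's formulas, i.e. the elementary symmetric functions) gives
\[
f(\lambda)=\lambda^N+\sum_{k=1}^N(-1)^k\Bigl(\sum_{1\le j_1<\cdots<j_k\le N}\prod_{\ell=1}^k\lambda_{j_\ell}\Bigr)\lambda^{N-k}.
\]
For the right-hand side I would invoke the Fredholm (multilinear) expansion of the determinant — precisely the identity already used in the proof of \eqref{deriflam}, applied now to $A$ in place of $H$ — which reads
\[
f(\lambda)=\lambda^N+\sum_{k=1}^N(-1)^k\Bigl(\sum_{1\le j_1<\cdots<j_k\le N}\underset{1\le\ell,m\le k}{{\rm det}}(A_{j_\ell j_m})\Bigr)\lambda^{N-k}.
\]
Since the coefficients of a polynomial are uniquely determined, comparing the coefficient of $\lambda^{N-k}$ in these two expressions yields \eqref{A2} for each $k=1,\dots,N$, which proves the lemma.

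If a self-contained derivation of the Fredholm expansion is wanted rather than a citation, I would obtain it from multilinearity of $\det$ in the columns of $\lambda I_N-A$: writing the $j$-th column as $\lambda e_j-a_j$, with $e_j$ the $j$-th coordinate vector and $a_j$ the $j$-th column of $A$, multilinearity gives $f(\lambda)=\sum_{S\subseteq\{1,\dots,N\}}(-1)^{N-|S|}\lambda^{|S|}\det N_S$, where $N_S$ is the matrix with column $e_j$ for $j\in S$ and column $a_j$ for $j\notin S$; Laplace expansion of $\det N_S$ along the unit-vector columns strips the rows and columns indexed by $S$ and leaves the principal submatrix of $A$ supported on $\{1,\dots,N\}\setminus S$, while the accumulated displacement signs cancel $(-1)^{N-|S|}$. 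Grouping terms by $k=N-|S|$ then produces exactly the second display above. The only genuinely delicate point in the whole argument is this sign computation; performing the Laplace expansions column-by-column in increasing order of index makes the cancellation transparent, and everything else is routine bookkeeping.
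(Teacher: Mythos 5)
Your proof is correct and is essentially the paper's own argument: both expand the characteristic polynomial once as $\prod_{j}(\lambda-\lambda_j)$ via the elementary symmetric functions and once via the Fredholm determinant expansion, then compare the coefficients of $\lambda^{N-k}$. The extra multilinearity derivation of the Fredholm expansion is a sound (optional) addition that the paper simply cites rather than proves.
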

\begin{proof}
Applying the binomial expansion to the characteristic polynomial $f(\lambda)=\det(\lambda I_N-A)$, we have
\begin{align*}
f(\lambda)=\lambda^N+\sum_{k=1}^N(-1)^k\lambda^{N-k}\sum_{1\le j_1<\cdots<j_k\le N}\ \prod_{\ell=1}^k\lambda_{j_\ell}.
\end{align*}
We also apply the Fredholm determinant expansion to $f(\lambda)$ and obtain 
\begin{align*}
f(\lambda)=\lambda^N+\sum_{k=1}^{N}(-1)^k\lambda^{N-k}\sum_{1\le j_1<\cdots<j_k\le N}\underset{1\le \ell,m\le k}{{\rm det}}(A_{j_\ell j_m}).
\end{align*}
Therefore, the claim holds by comparing the coefficient of $\lambda^{N-k}$ on each other. 
\end{proof}

\begin{lemma}[Twice cofactor expansion form]\label{lemA3}\ \\ 
For any $N \times N$ matrix $A$ and fixed integers $k<\ell$, 
\begin{align}\label{A3}
\det A&=a_{kk}\det(A_{k|k})-a_{k\ell}a_{\ell k}\det(A_{k\ell |\ell k})+\sum_{\substack{q\neq k,\ell\\q<\ell}}(-1)^{k+q-1}a_{k\ell}a_{\ell q}\det(A_{k\ell | \ell q})\nonumber\\
&+\sum_{\substack{q\neq k,\ell\\q>\ell}}(-1)^{k+q}a_{k\ell}a_{\ell q}\det(A_{k\ell | \ell q})+\sum_{\substack{p\neq k,\ell\\p>k}}(-1)^{\ell+p-1}a_{kp}a_{\ell k}\det(A_{k\ell |pk})\nonumber\\
&+\sum_{\substack{p\neq k,\ell\\p<k}}(-1)^{\ell+p}a_{kp}a_{\ell k}\det(A_{k\ell |pk})+\sum_{\substack{p\neq k,\ell,\ q \neq k \\p>q}}(-1)^{k+\ell+p+q-1}a_{kp}a_{\ell q}\det(A_{k\ell |pq})\nonumber\\
&+\sum_{\substack{p\neq k,\ell,\ q \neq k \\p<q}}(-1)^{k+\ell+p+q}a_{kp}a_{\ell q}\det(A_{k\ell |pq}).
\end{align}
\end{lemma}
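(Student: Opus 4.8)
The plan is to obtain \eqref{A3} by performing the Laplace (cofactor) expansion of $\det A$ twice in succession: first along the $k$-th row, and then, inside each resulting $(N-1)\times(N-1)$ minor, along the row that was the $\ell$-th row of $A$. First I would write
\begin{align*}
\det A=\sum_{p=1}^N(-1)^{k+p}a_{kp}\det(A_{k|p})
\end{align*}
and isolate the term $p=k$, which is precisely the leading summand $a_{kk}\det(A_{k|k})$. It then remains to expand $\det(A_{k|p})$ for each $p\neq k$.

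Because $k<\ell$, deleting the $k$-th row sends the original $\ell$-th row of $A$ to the $(\ell-1)$-st row of $A_{k|p}$, whose entries are $a_{\ell q}$ with $q$ ranging over $\{1,\dots,N\}\setminus\{p\}$. Expanding $\det(A_{k|p})$ along that row, I would track the position of the original column index $q$ inside $A_{k|p}$: it occupies column $q$ when $q<p$ and column $q-1$ when $q>p$, so the cofactor sign multiplying $a_{\ell q}$ is $(-1)^{(\ell-1)+q}$ or $(-1)^{(\ell-1)+(q-1)}$ respectively, and the surviving minor is exactly $\det(A_{k\ell|pq})$. Folding in the outer sign $(-1)^{k+p}$ yields
\begin{align*}
\det A=a_{kk}\det(A_{k|k})+\sum_{\substack{p\neq k,\ q\neq p\\ q<p}}(-1)^{k+\ell+p+q-1}a_{kp}a_{\ell q}\det(A_{k\ell|pq})+\sum_{\substack{p\neq k,\ q\neq p\\ q>p}}(-1)^{k+\ell+p+q}a_{kp}a_{\ell q}\det(A_{k\ell|pq}).
\end{align*}

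To conclude I would partition this double sum into the pieces $p=\ell$, $q=k$, both of these, and neither. The single pair $(p,q)=(\ell,k)$, for which $k<\ell$ forces $q<p$ and hence the sign $(-1)^{2k+2\ell-1}=-1$, gives $-a_{k\ell}a_{\ell k}\det(A_{k\ell|\ell k})$; setting $p=\ell$ with $q\neq k$ collapses the signs to $(-1)^{k+q-1}$ for $q<\ell$ and $(-1)^{k+q}$ for $q>\ell$, producing the two $a_{k\ell}a_{\ell q}$ sums; setting $q=k$ with $p\neq\ell$ collapses them to $(-1)^{\ell+p-1}$ for $p>k$ and $(-1)^{\ell+p}$ for $p<k$, producing the two $a_{kp}a_{\ell k}$ sums; and the remaining pairs, with $p\neq k,\ell$ and $q\neq k$, give the two general sums over $p>q$ and $p<q$ with the signs already computed, which is exactly \eqref{A3}. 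The one genuinely delicate point, which I would check term by term, is the parity bookkeeping caused by the index shifts under deletion — that is, correctly recording whether the original indices $\ell$ and $q$ lie to the left or to the right of the deleted indices $k$ and $p$ — since that is the only place where a misplaced sign could enter; everything else is routine.
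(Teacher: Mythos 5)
Your proposal is correct and is exactly the paper's argument: the paper's proof consists of the single sentence ``expand $\det A$ by the $k$-th row, then expand each resulting $(N-1)\times(N-1)$ determinant by the $\ell$-th row,'' and your careful tracking of the index shifts (row $\ell\mapsto\ell-1$ since $k<\ell$, column $q\mapsto q-1$ when $q>p$) together with the case split $p=\ell$, $q=k$, both, or neither reproduces every sign in \eqref{A3}.
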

\begin{proof}
We expand $\det A$ by the $k$-th row, and we also expand each of the $(N-1)$-th determinants by the $\ell$-th row.
\end{proof}

\begin{lemma}[Cauchy-Binet formula, \cite{HJ}]\label{lemA4}\ \\
Let $A \in M_{m,n}(\C)$. For index sets $\alpha=\{i_1,\dots,i_p\} \subseteq \{1,\dots,m\},\ p \le m$\ and $\beta=\{j_1,\dots, j_q\} \subseteq \{1,\dots, n\},\ q \le n$,\ the $p \times q$ submatrix $A(\alpha,\beta)$ is defined as $A(\alpha,\beta)_{r,s}:=A_{i_r,j_s}$. Here, $i_1<\cdots<i_p$ and $j_1<\cdots<j_q$ hold and these index sets are ordered lexicographically. When $\sharp\alpha = \sharp\beta = k \le min\{m,n\}$, the $k$-th compound matrix of $A$ is defined as the $\binom{m}{k} \times \binom{n}{k}$ matrix whose $(\alpha, \beta)$ entry is $\det(A(\alpha,\beta))$ and we denote this by $C_k(A)$.\\
Let $A \in M_{m,k}(\C)$,\ $B \in M_{k,n}(\C)$ and $C:=AB$. We fix the index sets $\alpha \subseteq \{1,\dots,m\}$ and $\beta \subseteq \{1,\dots, n\}$ where $\sharp\alpha = \sharp\beta =r \le \min\{m,k,n\}$. Then the determinant of the submatrix $C(\alpha, \beta)$ has an expression:
\begin{align}
\det(C(\alpha, \beta))=\sum_{\gamma}\det(A(\alpha,\gamma))\det(B(\gamma,\beta)),
\end{align}
where the summation is taken over all index sets $\gamma \subseteq \{1,\dots,k\}$ of cardinality $r$.
\end{lemma}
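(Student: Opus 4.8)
The plan is to prove the identity by a direct multilinear expansion of the $r\times r$ determinant $\det\bigl(C(\alpha,\beta)\bigr)$, using that each of its entries is a sum of products of an entry of $A$ and an entry of $B$. Write $\alpha=\{i_1<\cdots<i_r\}$ and $\beta=\{j_1<\cdots<j_r\}$. First I would record that for $1\le u,v\le r$,
\begin{align*}
C(\alpha,\beta)_{u,v}=C_{i_u,j_v}=\sum_{s=1}^{k}A_{i_u,s}B_{s,j_v},
\end{align*}
and then expand the determinant by multilinearity in its $r$ rows to get
\begin{align*}
\det\bigl(C(\alpha,\beta)\bigr)=\sum_{(s_1,\dots,s_r)\in\{1,\dots,k\}^{r}}\Bigl(\prod_{u=1}^{r}A_{i_u,s_u}\Bigr)\det\bigl((B_{s_u,j_v})_{1\le u,v\le r}\bigr).
\end{align*}

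Next I would discard the degenerate tuples: whenever $s_u=s_{u'}$ for some $u\neq u'$, the matrix $(B_{s_u,j_v})_{1\le u,v\le r}$ has two equal rows and its determinant vanishes, so that term drops out. Only the injective tuples survive, and each injective tuple $(s_1,\dots,s_r)$ decomposes uniquely as $s_u=g_{\sigma(u)}$ for an $r$-element subset $\gamma=\{g_1<\cdots<g_r\}\subseteq\{1,\dots,k\}$ and a permutation $\sigma\in S_r$. Regrouping the sum according to $\gamma$ and using that sorting the rows of $(B_{g_{\sigma(u)},j_v})_{1\le u,v\le r}$ into the increasing order $\gamma$ multiplies the determinant by $\mathrm{sgn}(\sigma)$, the inner sum over $\sigma$ collapses via the Leibniz formula:
\begin{align*}
&\sum_{\sigma\in S_r}\mathrm{sgn}(\sigma)\Bigl(\prod_{u=1}^{r}A_{i_u,g_{\sigma(u)}}\Bigr)\det\bigl((B_{g_u,j_v})_{1\le u,v\le r}\bigr)\\
&\qquad=\det\bigl((A_{i_u,g_v})_{1\le u,v\le r}\bigr)\,\det\bigl((B_{g_u,j_v})_{1\le u,v\le r}\bigr)=\det\bigl(A(\alpha,\gamma)\bigr)\,\det\bigl(B(\gamma,\beta)\bigr).
\end{align*}
Summing over all $r$-element subsets $\gamma\subseteq\{1,\dots,k\}$ then gives the asserted formula. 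As alternatives, I would remark that this identity is precisely the $(\alpha,\beta)$ entry of the multiplicativity relation $C_r(AB)=C_r(A)\,C_r(B)$ for $r$-th compound matrices, and that it can equally be obtained by evaluating the determinant of the $(k+r)\times(k+r)$ block matrix with blocks $I_k$, $-B(\,\cdot\,,\beta)$, $A(\alpha,\,\cdot\,)$, $0$ in two ways --- a Schur-complement reduction yielding $\det C(\alpha,\beta)$, versus a generalized Laplace expansion along its last $r$ rows --- but both routes pass through the same combinatorial core.

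I expect the only genuinely delicate point to be the sign bookkeeping in the second step: one must verify that the sign incurred when sorting the $B$-rows indexed by $(s_1,\dots,s_r)$ into increasing order is exactly the $\mathrm{sgn}(\sigma)$ needed so that $\sum_{\sigma}\mathrm{sgn}(\sigma)\prod_{u}A_{i_u,g_{\sigma(u)}}$ reassembles into $\det\bigl(A(\alpha,\gamma)\bigr)$, and that the convention that $\alpha$, $\beta$ and $\gamma$ are listed in increasing (lexicographic) order introduces no compensating sign. This is routine but is the only place where care is required; the rest is a mechanical bookkeeping of the multilinear expansion.
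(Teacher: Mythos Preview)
Your proof is correct and is the standard multilinear-expansion argument for the Cauchy--Binet formula. Note, however, that the paper does not actually prove this lemma: it is stated in the Appendix with a citation to \cite{HJ} and no proof is given, so there is nothing to compare your argument against.
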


\begin{lemma}[Sylvester's\ identity, \cite{HJ}]\label{lemA5}\ \\
For any square matrix $A$, we have
\begin{align}
|A|\times|A_{ij|kl}|=
\begin{vmatrix}
|A_{i|k}| & |A_{i|l}| \\
|A_{j|k}| & |A_{j|l}|
\end{vmatrix}. 
\end{align}
\end{lemma}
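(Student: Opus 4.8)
The plan is to reduce the identity to its classical ``corner'' case, in which the deleted rows and columns are the first and the last, and then to prove that case by a Schur-complement computation. Throughout I take $A$ to be $n\times n$ and $i<j$, $k<l$, which is the situation in every application of the lemma in this paper.

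First I would normalise the positions of the deleted indices: pick permutation matrices $\Pi_{r},\Pi_{c}$ so that $B:=\Pi_{r}A\Pi_{c}$ carries the old row $i$ to position $1$, the old row $j$ to position $n$, the old column $k$ to position $1$, the old column $l$ to position $n$, and leaves the remaining rows and columns in their original relative order. Then $|B|=\mathrm{sgn}(\Pi_{r})\,\mathrm{sgn}(\Pi_{c})\,|A|$; by construction $B_{1n|1n}=A_{ij|kl}$; and $B_{1|1}$, $B_{n|n}$, $B_{1|n}$, $B_{n|1}$ are obtained from $A_{i|k}$, $A_{j|l}$, $A_{i|l}$, $A_{j|k}$ respectively just by sliding one row and one column into an extremal slot. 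Counting adjacent transpositions, I would check that both products $|B_{1|1}|\,|B_{n|n}|$ and $|B_{1|n}|\,|B_{n|1}|$ equal $(-1)^{i+j+k+l}$ times, respectively, $|A_{i|k}|\,|A_{j|l}|$ and $|A_{i|l}|\,|A_{j|k}|$, and that $\mathrm{sgn}(\Pi_{r})\,\mathrm{sgn}(\Pi_{c})=(-1)^{i+j+k+l}$ as well. Hence the asserted identity for $A$ is equivalent to the corner identity
\[
|B|\,|B_{1n|1n}|=|B_{1|1}|\,|B_{n|n}|-|B_{1|n}|\,|B_{n|1}|.
\]

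Next I would settle the corner identity. Partition $B$ into row- and column-blocks of sizes $1,n-2,1$, writing $B=\left(\begin{smallmatrix} a & \mathbf b^{T} & c\\ \mathbf d & M & \mathbf e\\ f & \mathbf g^{T} & h\end{smallmatrix}\right)$ with $M:=B_{1n|1n}$. Assuming first that $M$ is invertible, set $a':=a-\mathbf b^{T}M^{-1}\mathbf d$ and $c',f',h'$ analogously, so that $\left(\begin{smallmatrix} a' & c'\\ f' & h'\end{smallmatrix}\right)$ is the Schur complement of $M$ in $B$ (after grouping the indices $\{1,n\}$ together). The Schur-complement formula then gives $|B|=|M|(a'h'-c'f')$, and applied to the four $(n-1)$-minors it gives $|B_{1|1}|=|M|h'$, $|B_{n|n}|=|M|a'$, $|B_{1|n}|=(-1)^{n}|M|f'$, $|B_{n|1}|=(-1)^{n}|M|c'$, the factors $(-1)^{n}$ coming solely from moving $M$ back into a corner of the off-diagonal minors. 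Substituting, the corner identity collapses to $|M|^{2}(a'h'-c'f')=|M|^{2}(h'a'-f'c')$, which is trivially true. Finally, since both sides of the original identity are polynomials in the entries of $A$ that agree on the dense set where $\det M\neq0$, they agree identically, so the invertibility assumption can be dropped.

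The hard part is not the algebra, which is immediate once the blocks are in place, but the sign bookkeeping of Step~1: one must verify that the permutation sign and the signs picked up by the two products of $(n-1)$-minors all coincide (each equal to $(-1)^{i+j+k+l}$), and then keep track of the two $(-1)^{n}$ factors in the off-diagonal Schur complements of Step~2 and see that they cancel. An alternative that replaces this permutation count by a different bit of bookkeeping is to pass to the adjugate: using $(\mathrm{adj}\,A)_{pq}=(-1)^{p+q}|A_{q|p}|$ and $A\,\mathrm{adj}(A)=|A|\,I_{n}$, the right-hand side equals $(-1)^{i+j+k+l}$ times the $2\times2$ minor of $\mathrm{adj}(A)$ on rows $\{k,l\}$ and columns $\{i,j\}$, after which one invokes --- or proves, by replacing the non-selected columns of $\mathrm{adj}(A)$ by standard basis vectors and evaluating $\det\bigl(A\cdot(\text{that matrix})\bigr)$ in two ways --- Jacobi's theorem on complementary minors of the adjugate.
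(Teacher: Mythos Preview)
Your argument is correct. The permutation bookkeeping in Step~1 checks out: moving row $i$ to slot $1$ costs $i-1$ transpositions and row $j$ to slot $n$ costs $n-j$, and likewise for the columns, so $\mathrm{sgn}(\Pi_r)\,\mathrm{sgn}(\Pi_c)=(-1)^{i+j+k+l}$; and each of the four $(n-1)$-minors $B_{1|1},B_{n|n},B_{1|n},B_{n|1}$ differs from the corresponding $A$-minor by sliding exactly one surviving row and one surviving column into an extremal slot, which again contributes $(-1)^{i+j+k+l}$ to each of the two products. The Schur-complement computation in Step~2 is clean, the two stray $(-1)^n$ factors from the off-diagonal minors indeed cancel as a pair, and the polynomial-density extension in Step~3 is standard.

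As for comparison with the paper: there is nothing to compare. The paper does not prove Lemma~\ref{lemA5} at all --- it simply states it with a citation to Horn and Johnson \cite{HJ} and moves on. Your write-up therefore supplies a self-contained proof where the paper gives none. The Schur-complement route you take is one of the standard textbook proofs; your suggested alternative via the adjugate and Jacobi's complementary-minor theorem is the other standard one, and either would be acceptable here since the lemma is only used as a black box (twice, in the derivation of \eqref{quad} and in Lemma~\ref{lem3.3}).
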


\begin{lemma}[Strong Cauchy's interlacing law, \cite{HJ}]\label{strsep}\ \\
Let $A$ be an $n \times n$ symmetric tridiagonal matrix:
\begin{align*}
A:=
\begin{pmatrix}
a_1 & b_{1}  & 0 & \cdots & \cdots & \cdots & 0\\
 b_{1} & a_2 &  b_{2} & \ddots &  &  & \vdots \\
 0 & b_{2} & a_3 & \ddots & \ddots & & \vdots\\
 \vdots & \ddots & \ddots& \ddots& \ddots&  \ddots& \vdots\\
\vdots & & \ddots & \ddots & a_{n-2} & b_{n-2} & 0\\
 \vdots &   &  & \ddots & b_{n-2} & a_{n-1} & b_{n-1}\\
 0  & \cdots &\cdots & \cdots & 0 & b_{n-1} & a_n
\end{pmatrix}.
\end{align*}
Let $\lambda_1\le \cdots \le \lambda_n$ and $\eta_1\le \cdots \le \eta_{n-1}$ be the eigenvalues of A and that of the $(n-1)$-th leading principal minor of $A$, respectively. Suppose $b_i\neq 0$ for $1 \le i \le n-1$, then the strong Cauchy's interlacing law holds:
\begin{align}\label{sci}
\lambda_1<\eta_1<\lambda_2<\cdots<\lambda_{n-1}<\eta_{n-1}<\lambda_n.
\end{align}
In particular, the eigenvalues $\{\lambda_i\}_{1\le i \le n}$ of $A$ are distinct.
\end{lemma}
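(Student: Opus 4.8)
The plan is to combine the classical \emph{weak} Cauchy interlacing inequalities (valid for any symmetric matrix and any $(n-1)$-dimensional principal submatrix) with a second ingredient, specific to the tridiagonal structure and to the hypothesis $b_i\neq0$, namely that $A$ and its $(n-1)$-st leading principal submatrix have \emph{disjoint} spectra. Together these two facts force every inequality in \eqref{sci} to be strict, and the distinctness of $\{\lambda_i\}$ is then an immediate byproduct.

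First I would recall the weak interlacing theorem (\cite{HJ}): applying the Courant--Fischer min--max characterization to $A$ and to the principal submatrix $A^{(n-1)}$ obtained by deleting the last row and column yields
\[
\lambda_1\le\eta_1\le\lambda_2\le\eta_2\le\cdots\le\eta_{n-1}\le\lambda_n .
\]
This step uses no hypothesis on the $b_i$ and no tridiagonality.

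Second, the crux. Write $p_k(\lambda):=\det(\lambda I_k-A^{(k)})$ for $k=0,\dots,n$, where $A^{(k)}$ is the $k\times k$ top-left block (so $p_0\equiv1$, $p_1(\lambda)=\lambda-a_1$, $A^{(n)}=A$). Since $\lambda I-A$ is again tridiagonal, expanding along the last row and column (exactly the computation behind \eqref{rec}) gives the three-term recurrence
\[
p_k(\lambda)=(\lambda-a_k)\,p_{k-1}(\lambda)-b_{k-1}^2\,p_{k-2}(\lambda),\qquad k=2,\dots,n .
\]
I claim that consecutive polynomials $p_k$ and $p_{k-1}$ have no common zero. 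Indeed, if $p_k(\mu)=p_{k-1}(\mu)=0$, the recurrence at level $k$ gives $0=-b_{k-1}^2p_{k-2}(\mu)$, hence $p_{k-2}(\mu)=0$ because $b_{k-1}\neq0$; iterating downward, $p_j(\mu)=0$ for all $j=k,k-1,\dots,0$, contradicting $p_0\equiv1$. Consequently, if $\eta$ is a root of $p_{n-1}$ then $p_{n-2}(\eta)\neq0$, so $p_n(\eta)=-b_{n-1}^2p_{n-2}(\eta)\neq0$: no eigenvalue of $A^{(n-1)}$ is an eigenvalue of $A$. The same no-common-root argument applied within the chain $p_0,\dots,p_{n-1}$ shows that $A^{(n-1)}$ has $n-1$ distinct eigenvalues, so $\eta_1<\cdots<\eta_{n-1}$. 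Feeding the disjointness of the two spectra into the weak chain above, each middle relation $\lambda_j\le\eta_j\le\lambda_{j+1}$ cannot be an equality, hence $\lambda_j<\eta_j<\lambda_{j+1}$ for all $j$; in particular $\lambda_1<\cdots<\lambda_n$. This is precisely \eqref{sci}.

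I expect the only real subtlety to be the bookkeeping in the downward-propagation argument (handling the base cases $p_0,p_1$) and the careful matching of the weak-interlacing indices with the disjointness conclusion so that every $\le$ genuinely upgrades to $<$. An alternative, entirely self-contained route, which I would mention in passing, is a direct induction on $n$: assuming $p_{n-2}$ and $p_{n-1}$ already strictly interlace, the value $p_n(\eta_j)=-b_{n-1}^2\,p_{n-2}(\eta_j)$ alternates in sign with $j$, so $p_n$ has a root strictly between consecutive $\eta_j$, while the leading behavior of $p_n$ as $\lambda\to\pm\infty$ supplies the extreme roots below $\eta_1$ and above $\eta_{n-1}$.
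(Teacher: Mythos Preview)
Your proposal is correct and follows essentially the same approach as the paper: invoke the weak Cauchy interlacing from \cite{HJ}, set up the three-term recurrence $f_k(\lambda)=(\lambda-a_k)f_{k-1}(\lambda)-b_{k-1}^2f_{k-2}(\lambda)$, and use $b_i\neq0$ to propagate a hypothetical common zero of $f_n,f_{n-1}$ down to the contradiction $f_0(\lambda_0)=0$. Your write-up is a bit more explicit (you state the no-common-root claim for all consecutive $p_k,p_{k-1}$ and spell out how disjointness upgrades each $\le$ to $<$), and you add the optional sign-alternation induction as a second route, but the core argument is the same as the paper's.
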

\begin{proof}
For $2 \le k \le n$, let $A^{(k)}$ be the $k$-th leading principal minor of $A$, and $f_k(\lambda):=\det(\lambda I_k-A^{(k)})$. Expanding determinants, we have
\begin{align}\label{trieq}
f_{k}(\lambda)=(\lambda-a_k)f_{k-1}(\lambda) - b_{k-1}^2f_{k-2}(\lambda), \ \ 2 \le k \le n,
\end{align}
where $f_0(\lambda)\equiv1$. By the Cauchy's interlacing law \cite{HJ}, we have
$$
\lambda_1\le \eta_1\le \lambda_2 \le \cdots \le \eta_{n-1} \le\lambda_n.
$$
Hence, to show the claim, it is sufficient to prove the strong interlacement. Suppose that there exists $\lambda_0 \in \R$ such that $f_n(\lambda_0)=f_{n-1}(\lambda_0)=0$. Then, by the assumption of $\{b_i\}$ and using \eqref{trieq} for $k=n-1,\dots,2$ repeatedly, we have $f_0(\lambda_0)=0$, which is the contradiction.
\end{proof}

\section*{Acknowledgments}
The author wishes to express his thanks to Prof. Hideki Tanemura for suggesting the problem  and giving him insightful comments.

\end{document}